\newtheoremstyle{ptheorem}{1em}{0em}{\itshape}{}{\bfseries}{.}{.5em}{}
\theoremstyle{ptheorem}
\newtheorem{thm}{Theorem}[section]
\newtheorem{pro}[thm]{Proposition}
\newtheorem{lem}[thm]{Lemma}
\newtheorem{cor}[thm]{Corollary}
\theoremstyle{definition}
\newtheorem{dfn}{Definition}[section]
\theoremstyle{remark}
\newtheorem{exa}{Example}[section]
\newtheorem{rem}{Remark}[section]
\numberwithin{equation}{section}
\numberwithin{figure}{section}
\DeclareMathOperator{\dif}{d}
\DeclareMathOperator{\arctanh}{arctanh}
\newcommand{\cH}{{\mathcal H}}
\newcommand{\bR}{{\mathbb R}}
\newcommand{\bZ}{{\mathbb Z}}
\renewcommand{\a}{\alpha}
\renewcommand{\b}{\beta}
\renewcommand{\c}{\gamma}
\renewcommand{\l}{\lambda}
\newcommand{\e}{\epsilon}
\renewcommand{\phi}{\varphi}
\newcommand{\fa}{\forall}
\newcommand{\nkp}{\enskip}
\newcommand{\sfa}{\nkp\fa}
\renewcommand{\(}{\left(}
\renewcommand{\)}{\right)}
\renewcommand{\[}{\left[}
\renewcommand{\]}{\right]}
\newcommand{\til}{\tilde}
\begin{document}
\title{Solutions and Green's function of the first order linear equation with reflection and initial conditions\footnote{Partially supported by FEDER and Ministerio de Educaci\'on y Ciencia, Spain, project MTM2010-15314}}

\author{
Alberto Cabada \, and F. Adri\'an F. Tojo\footnote{Supported by  FPU scholarship, Ministerio de Educaci\'on, Cultura y Deporte, Spain.} \\
\normalsize
Departamento de An\'alise Ma\-te\-m\'a\-ti\-ca, Facultade de Matem\'aticas,\\ 
\normalsize Universidade de Santiago de Com\-pos\-te\-la, Spain.\\ 
\normalsize e-mail: alberto.cabada@usc.es, fernandoadrian.fernandez@usc.es}
\date{}

\maketitle

\begin{abstract}
This work is devoted to the study of the existence and sign of Green's functions for first order linear problems with constant coefficients and initial (one point) conditions. We first prove a result on the existence of solutions of $n$-th order linear equations with involutions via some auxiliary functions to later prove a uniqueness result in the first order case. We study then different situations for which a Green's function can be obtained explicitly and derive several results in order to obtain information about the sign of the Green's function. Once the sign is known, optimal maximum and anti-maximum principles follow.
\end{abstract}

\noindent {\bf Keywords:}  Equations with involutions. Equations with reflection. Green's functions.  Maximum principles. Comparison principles. Periodic conditions.
\section{Introduction}

The study of functional differential equations with involutions (DEI) can be traced back to the solution of the equation $x'(t)=x(1/t)$ by Silberstein (see \cite{Sil}) in 1940. Briefly speaking, an involution is just a function $f$ that satisfies $f(f(x))=x$ for every $x$ in its domain of definition. For most applications in analysis, the involution is defined on an interval of $\bR$ and in the majority of the cases, it is continuous, which implies it is decreasing and has a unique fixed point. Ever since that foundational paper of Siberstein, the study of problems with DEI has been mainly focused on those cases with initial conditions, with an extensive research in the case of the reflection $f(x)=-x$.\par
Wiener and Watkins study in \cite{Wie} the solution of the equation $x'(t)-a\, x(-t)=0$ with initial conditions. Equation $x'(t)+a\, x(t)+b\,x(-t)=g(t)$ has been treated by Piao in \cite{Pia, Pia2}. In \cite{Kul, Sha, Wie, Wat1, Wie2} some results are introduced to transform this kind of problems with involutions and initial conditions into second order ordinary differential equations with initial conditions or first order two dimensional systems, granting that the solution of the last will be a solution to the first. Furthermore, asymptotic properties and boundedness of the solutions of initial first order problems are studied in \cite{Wat2} and \cite{Aft} respectively. Second order boundary value problems have been considered in \cite{Gup, Gup2, Ore2, Wie2} for Dirichlet and Sturm-Liouville boundary value conditions, higher order equations has been studied in \cite{Ore}. Other techniques applied to problems with reflection of the argument can be found in \cite{And, Ma, Wie1}.\par
More recently, the papers of Cabada et al. \cite{Cab4, Cab5} have further studied the case of the second order equation with two-point boundary conditions, adding a new element to the previous studies: the existence of a Green's function. Once the study of the sign of the aforementioned function is done, maximum and anti-maximum principles follow. Other works in which Green's functions are obtained  for functional differential equations (but with a fairly different setting, like delay or normal equations) are, for instance, \cite{AzDo1, AzDo2, Aga, Dom1, Dom2,Dom3}.\par
In this paper we try to answer to the following question: How is it possible find a solution of an initial problem with a differential equation with reflection? What is more, in which cases can a Green's function be constructed and how can it be found?\par
Section 2 will have two parts. In the first one we construct the solutions of the $n$-th order DEI with reflection, constant coefficients and initial conditions. In the second one we find the Green's function for the order one case. In Section 3 we apply these findings in order to describe exhaustively the range of values for which suitable comparison results are fulfilled and we illustrate them with some examples.

\section{Solutions of the initial problem}
In order to prove an existence result for the  $n$-th order DEI with reflection, we consider the even and odd parts of a function $f$, that is $f_e(x):=[f(x)+f(-x)]/2$ and $f_o(x):=[f(x)-f(-x)]/2$ as done in \cite{Cab4}.
\subsection{The $n$-th order problem}
Consider the following  $n$-th order DEI with involution
\begin{equation}\label{hordp} Lu:=\sum_{k=0}^n\[a_ku^{(k)}(-t)+b_ku^{(k)}(t)\]=h(t),\ t\in\bR;\quad u(t_0)=c,\end{equation}
where  $h\in L_{\operatorname{loc}}^1(\bR)$, $t_0$, $c$, $a_k$, $b_k\in\bR$ for $k=0,\dots n-1$; $a_n=0$; $b_n=1$. A solution to this problem will be a function $u\in W^{n,1}_{\operatorname{loc}}(\bR)$, that is, $u$ is $k$ times differentiable in the sense of distributions and each of the derivatives satisfies $u^{k)}|_K\in L^1(K)$ for every compact set $K\subset\bR$.

\begin{thm}\label{thmconstsoln} 
Assume that there exist $\til u$ and $\til v$, functions such that satisfy
\begin{align}\sum_{i=0}^{n-j}\binom{i+j}{j}\[(-1)^{n+i-1}a_{i+j}\til u^{(i)}(-t)+b_{i+j}\til u^{(i)}(t)\]= & 0,\  t\in\bR;\ j=0,\dots,n-1,\\
\sum_{i=0}^{n-j}\binom{i+j}{j}\[(-1)^{n+i}a_{i+j}\til v^{(i)}(-t)+b_{i+j}\til v^{(i)}(t)\]= & 0,\  t\in\bR;\ j=0,\dots,n-1, \\ (\til u_e\til v_e-\til u_o\til v_o)(t)\ne & 0,\ t\in\bR.\end{align}
and also one of the following
\begin{align*}
(h1) & \quad L\,\til u=0\ \text{and}\ \til u (t_0)\ne 0,\\
(h2) & \quad L\,\til v=0\ \text{and}\ \til v (t_0)\ne 0,\\
(h3) & \quad a_0+b_0\ne 0\ \text{and}\  (a_0+b_0)\int_0^{t_0}(t_0-s)^{n-1}\frac{\til v(t_0)\til u_e(s)-\til u(t_0)\til v_o(s)}{(\til u_e\til v_e-\til u_o\til v_o)(s)}\dif s\ne 1.
\end{align*}
Then problem \eqref{hordp} has a solution.
\end{thm}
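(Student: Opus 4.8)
The plan is to produce an \emph{explicit} candidate solution and verify it directly, rather than to argue by abstract fixed-point or functional-analytic methods. The first thing I would record is how the reflection interacts with differentiation and integration: since $\frac{\dif^k}{\dif t^k}w(-t)=(-1)^k w^{(k)}(-t)$, applying $L$ to an $n$-fold integral reshuffles the orders of differentiation and produces, alongside ordinary integrals $\int_0^t$, reflected integrals $\int_0^{-t}$. The weights $\binom{i+j}{j}$ appearing in the two displayed families are exactly the combinatorial factors generated when $L$ is commuted past the Cauchy kernel $\frac{(t-s)^{n-1}}{(n-1)!}$ of $n$-fold integration and the resulting terms are regrouped by differentiation order. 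Read this way, the two families (one identity for each $j=0,\dots,n-1$) say that $\til u$ and $\til v$ are annihilated by a whole ladder of auxiliary operators — the pieces into which $L$ splits once it is pushed through repeated integration — the sign patterns $(-1)^{n+i-1}$ versus $(-1)^{n+i}$ distinguishing $\til u$ from its reflected counterpart $\til v$. So Step~1 is to interpret these hypotheses as precisely the compatibility conditions that will force cancellation in Step~2.

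Step~2 is to construct a particular solution of $L\,u=h$ (ignoring the initial condition). I would build a Green-type kernel out of $\til u$ and $\til v$, of the form $(t-s)^{n-1}$ times a ratio whose denominator is the nonvanishing quantity $(\til u_e\til v_e-\til u_o\til v_o)(s)$ — exactly the determinant-like weight that surfaces again in $(h3)$ — and set $u_p$ equal to the corresponding integral against $h$. Applying $L$ to $u_p$, differentiating under the integral sign, and splitting every term into its even and odd parts in $t$, the identities from Step~1 annihilate all contributions below top order, while the normalization $a_n=0,\ b_n=1$ together with the $(n-1)!$ in the Cauchy kernel returns exactly $h(t)$. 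This is the computational heart and the \textbf{main obstacle}: one must carefully track the reflected integrals $\int_0^{-t}$ created when the argument $-t$ meets the integration limits, match them against the unreflected ones, and keep the signs $(-1)^{n+i-1}$, $(-1)^{n+i}$ and the binomial weights aligned so that the cancellation is exact for every intermediate order.

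Step~3 is to correct the initial value. Since $u_p$ need not satisfy $u(t_0)=c$, I would add a homogeneous correction carrying one free scalar parameter and solve the resulting single linear equation, which is where the trichotomy enters. Under $(h1)$, $\til u$ is moreover assumed to be a genuine solution of $L\,\til u=0$ with $\til u(t_0)\ne0$, so a multiple of $\til u$ adjusts the value freely; $(h2)$ is the symmetric statement for $\til v$. Under $(h3)$ neither is available, but $L$ sends a constant $c$ to $(a_0+b_0)c$, so feeding a constant through the same construction yields a function whose value at $t_0$ shifts $u(t_0)$; the coefficient multiplying the free parameter in the equation $u(t_0)=c$ then equals $1-(a_0+b_0)\int_0^{t_0}(t_0-s)^{n-1}\frac{\til v(t_0)\til u_e(s)-\til u(t_0)\til v_o(s)}{(\til u_e\til v_e-\til u_o\til v_o)(s)}\,\dif s$, and $(h3)$ is precisely the assertion that this is nonzero, so the parameter can be chosen to hit $c$. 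Assembling the three cases produces a solution in each, which completes the proof.
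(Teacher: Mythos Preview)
Your outline is correct and lands on the same three-beat structure as the paper: build a particular solution by $n$-fold integration against a kernel manufactured from $\til u$ and $\til v$, use the displayed hypotheses to kill the sub-top-order terms, then adjust the initial value via $(h1)$--$(h3)$. Where the paper differs is in how it organizes the ``main obstacle'' you flag. Rather than integrating a single kernel against $h$ and then wrestling with the reflected integrals $\int_0^{-t}$ that appear, the paper first decomposes the right-hand side as
\[
h=\phi\,\til u+\psi\,\til v,\qquad
\phi:=\frac{h_o\til v_e-h_e\til v_o}{\til u_e\til v_e-\til u_o\til v_o}\ \text{(odd)},\quad
\psi:=\frac{h_e\til u_e-h_o\til u_o}{\til u_e\til v_e-\til u_o\til v_o}\ \text{(even)},
\]
and then takes $y=\til\phi\,\til u$, $z=\til\psi\,\til v$ with $\til\phi,\til\psi$ the $n$-fold primitives from $0$. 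The point is that the parity of $\phi$ forces $\til\phi^{(j)}(-t)=(-1)^{n-1+j}\til\phi^{(j)}(t)$ for every $j$, so in the Leibniz expansion of $L(\til\phi\,\til u)$ one can \emph{factor out} $\til\phi^{(j)}(t)$ from both the $a_k$-term and the $b_k$-term; what remains inside the sum over $i$ is exactly the $j$-th hypothesis on $\til u$, hence zero for $j<n$, and the sole survivor $j=n$ returns $\phi\,\til u$. No reflected integrals ever appear, and the sign-and-binomial bookkeeping you anticipate collapses to a single index shift $k\mapsto i=k-j$. Your kernel formulation unwinds to the same formula (indeed the first-order Green's function later in the paper is precisely this $\bar u$ rewritten), but executed as you describe it you would be splitting in $t$ after integrating, whereas the paper splits $h$ in $s$ before integrating, which is what makes the cancellation transparent. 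Your Step~3 matches the paper exactly, including the reading of $(h3)$ as the nonvanishing of the coefficient obtained by feeding the constant $1/(a_0+b_0)$ through the same construction.
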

\begin{proof}
Define
$$\phi:=\frac{h_o\til v_e-h_e\til v_o}{\til u_e\til v_e-\til u_o\til v_o},\quad\text{and}\quad \psi:=\frac{h_e\til u_e-h_o \til u_0}{\til u_e\til v_e-\til u_o\til v_o}.$$
Observe that $\phi$ is odd, $\psi$ is even and $h=\phi \til u+\psi\til v$. So, in order to ensure the existence of solution of problem \eqref{hordp} it is enough to find $y$ and $z$ such that $Ly=\phi\til u$ and $Lz=\psi \til v$ for, in that case, defining $u=y+z$, we can conclude that $Lu=h$. We will deal with the initial condition later on.\par
Take $y=\til\phi\,\til u$, where $$\til\phi(t):=\int_{0}^t\int_{0}^{s_{n}}\cdots\int_{0}^{s_2}\phi(s_1)\dif s_1\stackrel{n}{\cdots}\dif s_n=\frac{1}{(n-1)!}\int_0^t(t-s)^{n-1}\phi(s)\dif s.$$ Observe that $\til\phi$ is even if $n$ is odd and vice-versa. In particular, we have that 
$$\til \phi^{(j)}(t)= (-1)^{j+n-1} \; \til \phi^{(j)}(-t)\,, \quad j=0, \ldots, n.$$
Thus,
\begin{align*}
Ly(t) & =\sum_{k=0}^n\[a_k(\til\phi\til u)^{(k)}(-t)+b_k(\til\phi\til u)^{(k)}(t)\]\\
&=\sum_{k=0}^n\sum_{j=0}^k\binom{k}{j}\[(-1)^ka_k\til\phi^{(j)}(-t)\til u^{(k-j)}(-t)+b_k\til\phi^{(j)}(t)\til u^{(k-j)}(t)\] \\ & =\sum_{k=0}^n\sum_{j=0}^k\binom{k}{j}\til\phi^{(j)}(t)\[(-1)^{k+j+n-1}a_k\til u^{(k-j)}(-t)+b_k\til u^{(k-j)}(t)\] \\ &  =\sum_{j=0}^n\til\phi^{(j)}(t)\sum_{k=j}^n\binom{k}{j}\[(-1)^{k+j+n-1}a_k\til u^{(k-j)}(-t)+b_k\til u^{(k-j)}(t)\] \\ &
=\sum_{j=0}^n\til\phi^{(j)}(t)\sum_{i=0}^{n-j}\binom{i+j}{j}\[(-1)^{i+n-1}a_{i+j}\til u^{(i)}(-t)+b_{i+j}\til u^{(i)}(t)\]=\til\phi^{(n)}(t)\til u(t)=\phi(t)\til u(t).
\end{align*}
Hence, $Ly=\phi \til u$. 

All the same, by taking $z=\til\psi\til v$ with $\til\psi(t):=\frac{1}{(n-1)!}\int_0^t(t-s)^{n-1}\psi(s)\dif s$, we have that $Lz=\psi \til v$.
\par
Hence, defining $\bar{u}:=y+z=\til\phi\,\til u+\til\psi\til v$ we have that $\bar{u}$ satisfies $L\,\bar{u}=h$ and $\bar{u}(0)=0$. 

If we assume $(h1)$, $w=\bar u+\frac{c-\bar{u}(t_0)}{\til u(t_0)}\til u$ is clearly a solution of problem \eqref{hordp}.

When $(h2)$ is fulfilled a solution of problem \eqref{hordp} is given by $w=\bar u+\frac{c-\bar{u}(t_0)}{\til v(t_0)}\til v$.

 If $(h3)$ holds, using the aforementioned construction we can find $w_1$ such that $L\,w_1=1$ and $w_1(0)=0$. Now, $w_2:=w_1-1/(a_0+b_0)$ satisfies $L\,w_2=0$. Observe that the second part of condition $(h2)$ is precisely $w_2(t_0)\ne 0$, and hence, defining $w=\bar u+\frac{c-\bar{u}(t_0)}{w_2(t_0)} w_2$ we have that $w$ is a solution of problem \eqref{hordp}.
\end{proof}
\begin{rem}
Having in mind condition $(h1)$ in Theorem \ref{thmconstsoln}, it is immediate to verify that $L\, \til u=0$ provided that 
\begin{center}
$a_i=0$ for all $i \in \{0,Ê\ldots,n-1\}$ such that $n+i$ is even.
\end{center}

In an analogous way for $(h2)$, one can show that  $L\, \til v=0$ when 
\begin{center}
 $a_i=0$ for all $i \in \{0,Ê\ldots,n-1\}$ such that $n+i$ is odd.
\end{center}
\end{rem}
\subsection{The first order problem}
After proving the general result for the $n$-th order case, we concentrate our work in the first order problem
\begin{equation}\label{gpabconst} u'(t)+a\,u(-t)+b\,u(t)=h(t),\text{ for a.\,e. } t\in \bR;\quad u(t_0)=c,\end{equation}
with $h\in L^1_{\operatorname{loc}}(\bR)$ and $t_0$, $a$, $b$, $c\in\bR$. A solution of this problem will be $u\in W^{1,1}_{\operatorname{loc}}(\bR)$.

In order to do so, we first study the homogeneous equation
\begin{equation}\label{heabconst} u'(t)+a\,u(-t)+b\,u(t)=0,\ t\in \bR.\end{equation}
By differentiating and making the proper substitutions we arrive to the equation
\begin{equation}\label{rheabconst} u''(t)+(a^2-b^2)u(t)=0,\ t\in \bR.\end{equation}
Let $\omega:=\sqrt{|a^2-b^2|}$. Equation \eqref{rheabconst} presents three different cases:\par
\textbf{(C1). $a^2>b^2$.} In such a case, $u(t)=\a\cos \omega t+\b\sin\omega t$ is a solution of  \eqref{rheabconst} for every $\a,\,\b\in\bR$. If we impose equation \eqref{heabconst} to this expression we arrive to the general solution
$$u(t)=\a(\cos\omega t-\frac{a+b}{\omega}\sin \omega t)$$
of equation \eqref{heabconst} with $\a\in\bR$.\par 
\textbf{(C2). $a^2<b^2$.} Now, $u(t)=\a\cosh \omega t+\b\sinh\omega t$ is a solution of  \eqref{rheabconst} for every $\a,\,\b\in\bR$. To get equation \eqref{heabconst}  we arrive to the general solution
$$u(t)=\a(\cosh\omega t-\frac{a+b}{\omega}\sinh \omega t)$$
of equation \eqref{heabconst} with $\a\in\bR$.\par 
\textbf{(C3). $a^2=b^2$.} In this a case, $u(t)=\a t+\b$ is a solution of  \eqref{rheabconst} for every $\a,\,\b\in\bR$. So, equation \eqref{heabconst} holds provided that one of the two following cases is fulfilled:

\textbf{(C3.1). $a=b$,} where
$$u(t)=\a(1-2\,a\ t)$$
is the general solution of equation \eqref{heabconst} with $\a\in\bR$, and\par 
\textbf{(C3.2). $a=-b$,} where
$$u(t)=\a$$
is the general solution of equation \eqref{heabconst} with $\a\in\bR$.\par 

Now, according to Theorem \ref{thmconstsoln}, we denote $\til u$, $\til v$ satisfying
\begin{align}\label{v1}\til u'(t)+a\til u(-t)+b\til u(t) & =0, \quad \til u(0)=1,\\ \label{v2}
\til v'(t)-a\til v(-t)+b\til v(t) & =0,\quad \til v(0)=1.\end{align}
Observe that $\til u$ and $\til v$ can be obtained from the explicit expressions of the cases (C1)--(C3) by taking $\a=1$.
\begin{rem} Note that if $u$ is in the case (C3.1), $v$ is in the case (C3.2) and vice-versa.\end{rem}
We have now the following properties of functions $\til u$ and $\til v$.
\begin{lem} \label{auxlemuv}For every $t,s\in\bR$, the following properties hold.
\begin{enumerate}
\item $\til u_e\equiv\til v_e$, $\til u_o\equiv k\,\til v_o$ for some real constant $k$ a.e.,
\item $\til u_e(s)\til v_e(t)=\til u_e(t)\til v_e(s)$, $\til u_o(s)\til v_o(t)=\til u_o(t)\til v_o(s)$,
\item $\til u_e\til v_e-\til u_o\til v_o\equiv 1$.
\item $\til u(s)\til v(-s)+\til u(-s)\til v(s)=2[\til u_e(s)\til v_e(s)-\til u_o(s)\til v_o(s)]=2.$
\end{enumerate}
\end{lem}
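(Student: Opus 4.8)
The plan is to avoid a separate treatment of the cases (C1)--(C3) and instead work directly from the defining relations \eqref{v1} and \eqref{v2}. Writing $\til u=\til u_e+\til u_o$ and $\til v=\til v_e+\til v_o$ and splitting each equation into its even and odd components (using that differentiation reverses parity and that $\til u(-t)=\til u_e(t)-\til u_o(t)$), I would first record the decoupled first order systems
\begin{align*}
(\til u_e)' &= (a-b)\til u_o, & (\til u_o)' &= -(a+b)\til u_e,\\
(\til v_e)' &= -(a+b)\til v_o, & (\til v_o)' &= (a-b)\til v_e,
\end{align*}
together with the initial data $\til u_e(0)=\til v_e(0)=1$ and $\til u_o(0)=\til v_o(0)=0$, which come from $\til u(0)=\til v(0)=1$ and the parity of the summands. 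Everything else should follow from these four identities by elementary ODE arguments, with no case distinction.

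For item (1), differentiating once more gives $(\til u_e)''=-(a^2-b^2)\til u_e$ and $(\til v_e)''=-(a^2-b^2)\til v_e$, and both $\til u_e$ and $\til v_e$ satisfy the same Cauchy data $y(0)=1$, $y'(0)=0$; uniqueness for this linear IVP yields $\til u_e\equiv\til v_e$. For the odd parts I would consider $D:=(a-b)\til u_o+(a+b)\til v_o$: it vanishes at $0$ and, using the systems together with $\til u_e\equiv\til v_e$, one gets $D'=(a+b)(a-b)(\til v_e-\til u_e)=0$, so $D\equiv 0$. This is exactly the (symmetric) linear dependence of $\til u_o$ and $\til v_o$; when $a\ne b$ it rewrites as $\til u_o\equiv k\,\til v_o$ with $k=-(a+b)/(a-b)$ (in the degenerate case $a=b$ one simply has $\til v_o\equiv0$, and $D\equiv0$ is the proper symmetric formulation). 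Item (2) is then immediate: the even identity is trivial once $\til u_e\equiv\til v_e$, and the odd identity $\til u_o(s)\til v_o(t)=\til u_o(t)\til v_o(s)$ is precisely the statement that two linearly dependent functions $f,g$ satisfy $f(s)g(t)=f(t)g(s)$.

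The heart of the matter is item (3), and here the systems pay off cleanly. Setting $F:=\til u_e\til v_e-\til u_o\til v_o$ and differentiating,
$$F'=(\til u_e)'\til v_e+\til u_e(\til v_e)'-(\til u_o)'\til v_o-\til u_o(\til v_o)',$$
all four terms cancel in pairs after substituting the derivatives above, so $F'\equiv0$; since $F(0)=1$ we conclude $F\equiv1$. I expect this cancellation --- rather than any single hard estimate --- to be the crucial computation, and it is worth double-checking the signs coming from the two systems, since the whole identity hinges on them. Finally item (4) needs no new analysis: for any even/odd splitting one has the purely algebraic identity
$$\til u(s)\til v(-s)+\til u(-s)\til v(s)=2\big(\til u_e\til v_e-\til u_o\til v_o\big)(s),$$
obtained by expanding the products, and combining it with item (3) gives the value $2$. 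As a sanity check one can confirm all four properties directly from the explicit forms in (C1)--(C3) (for instance $\til u_e=\til v_e=\cos\omega t$ and $\til u_e\til v_e-\til u_o\til v_o=\cos^2\omega t+\tfrac{a^2-b^2}{\omega^2}\sin^2\omega t=1$ in case (C1)), which is the route one might prefer if the explicit expressions are kept at hand.
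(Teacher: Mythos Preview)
Your proof is correct and takes a genuinely different route from the paper's. The paper's proof of this lemma is essentially two lines: properties (I) and (III) are ``checked by inspection of the different cases'' (C1)--(C3), (II) is declared a direct consequence of (I), and (IV) follows from the definition of even/odd parts together with (III). In other words, the paper relies on having the explicit formulas for $\til u$ and $\til v$ in hand and verifying the identities case by case.

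You instead extract from \eqref{v1}--\eqref{v2} the decoupled first-order systems for the even and odd parts and prove everything via uniqueness of linear IVPs and the constancy of $F=\til u_e\til v_e-\til u_o\til v_o$. This is more intrinsic: it never touches the trigonometric/hyperbolic/affine formulas, it explains \emph{why} (III) holds (a Wronskian-type conserved quantity) rather than just checking it, and it would generalize more readily if the explicit solutions were unavailable. The paper's approach, by contrast, is shorter to write down precisely because those explicit solutions are already sitting in (C1)--(C3), so the verification is a routine computation --- and your own sanity check at the end is exactly that computation in case (C1). One small point worth flagging: in the degenerate case $a=b\neq0$ one has $\til v_o\equiv 0$ while $\til u_o\not\equiv 0$, so the literal statement ``$\til u_o\equiv k\,\til v_o$'' in (I) fails for any constant $k$; your parenthetical correctly identifies $D\equiv 0$ as the right symmetric formulation, and this is all that is actually needed for (II) (both sides of $\til u_o(s)\til v_o(t)=\til u_o(t)\til v_o(s)$ vanish). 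The paper's statement is slightly loose here, but your argument handles the situation cleanly.
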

\begin{proof}
$(I)$ and $(III)$ can be checked by inspection of the different cases. $(II)$ is a direct consequence of $(I)$. $(IV)$ is obtained from the definition of even and odd parts and $(III)$.
\end{proof}
Now, Theorem \ref{thmconstsoln} has the following corollary.
\begin{cor}\label{corconstsoln} Problem  \eqref{gpabconst} has a unique solution if and only if  $\til u(t_0)\ne 0$.
\end{cor}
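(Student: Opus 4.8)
The plan is to read Corollary~\ref{corconstsoln} as the $n=1$ specialization of Theorem~\ref{thmconstsoln} for existence, and to handle uniqueness by hand through the dimension of the homogeneous solution space. First I would record that problem \eqref{gpabconst} is exactly \eqref{hordp} with $n=1$, $a_0=a$, $b_0=b$, $a_1=0$, $b_1=1$. Writing out the two summation hypotheses of Theorem~\ref{thmconstsoln} for $n=1$ (only the index $j=0$ survives) yields precisely $\til u'(t)+a\til u(-t)+b\til u(t)=0$ and $\til v'(t)-a\til v(-t)+b\til v(t)=0$, i.e.\ the defining relations \eqref{v1} and \eqref{v2}; the remaining structural condition $(\til u_e\til v_e-\til u_o\til v_o)(t)\ne0$ is Lemma~\ref{auxlemuv}(III). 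Thus $\til u$ and $\til v$ are admissible choices. Since \eqref{v1} says exactly $L\til u=0$, when $\til u(t_0)\ne0$ hypothesis $(h1)$ of Theorem~\ref{thmconstsoln} holds and a solution of \eqref{gpabconst} is produced directly. This settles existence in the ``if'' direction.

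The uniqueness half rests on the observation that the homogeneous equation \eqref{heabconst} has a one-dimensional solution space, spanned by $\til u$. I would extract this from the explicit integration carried out in the cases (C1)--(C3): in each case the general solution of \eqref{heabconst} is displayed as a single scalar multiple $\a\,(\,\cdots)$, and $\til u$ is that function with $\a=1$, so the solution space is exactly $\{\a\til u:\a\in\bR\}$. (A short regularity remark is in order first: a $W^{1,1}_{\operatorname{loc}}$ solution of \eqref{heabconst} is continuous, hence its right-hand side $-a\,u(-t)-b\,u(t)$ is continuous, so $u\in C^1$, and bootstrapping gives $u\in C^\infty$; therefore the classical case analysis applies to every admissible solution.) Granting this, if $u_1,u_2$ both solve \eqref{gpabconst}, then $w:=u_1-u_2$ satisfies $Lw=0$ and $w(t_0)=0$, whence $w=\a\til u$ with $\a\til u(t_0)=0$; when $\til u(t_0)\ne0$ this forces $\a=0$, so $u_1=u_2$. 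Combined with the previous paragraph, the ``if'' direction is complete.

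For the converse I would argue by contraposition. If $\til u(t_0)=0$, then $\til u$ is a nontrivial solution of $Lw=0$ that vanishes at $t_0$. Should \eqref{gpabconst} admit any solution $u_0$, then $u_0+\a\til u$ solves \eqref{gpabconst} for every $\a\in\bR$, because $L\til u=0$ and $(u_0+\a\til u)(t_0)=c+\a\til u(t_0)=c$; hence the solution is not unique. If instead \eqref{gpabconst} has no solution, it trivially fails to have a unique one. Either way, a unique solution forces $\til u(t_0)\ne0$. The only genuinely delicate point in the whole argument is the claim that the homogeneous solution space is exactly one-dimensional and spanned by $\til u$; this is why the concrete case analysis (C1)--(C3), rather than an abstract functional-analytic count, is what does the essential work, since the reduction to the second-order equation \eqref{rheabconst} alone would overestimate the dimension as two.
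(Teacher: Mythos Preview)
Your proof is correct and follows essentially the same route as the paper: existence via Theorem~\ref{thmconstsoln}$(h1)$ together with Lemma~\ref{auxlemuv}(III), uniqueness from the one-dimensionality of the homogeneous solution space established by the case analysis (C1)--(C3), and nonuniqueness when $\til u(t_0)=0$ by adding multiples of $\til u$. Your added regularity bootstrap justifying why every $W^{1,1}_{\operatorname{loc}}$ solution of \eqref{heabconst} falls under the classical case analysis is a worthwhile clarification that the paper leaves implicit.
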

\begin{proof} Considering Lemma \ref{auxlemuv} ($III$), $\til u$ and $\til v$, defined as in \eqref{v1} and \eqref{v2} respectively, satisfy the hypothesis of  Theorem  \ref{thmconstsoln}, $(h1)$, therefore a solution exists.\par
Now, assume $w_1$ and $w_2$ are two solutions of \eqref{gpabconst}. Then $w_2-w_1$ is a solution of \eqref{heabconst}. Hence, $w_2-w_1$ is of one of the forms covered in the cases (C1)--(C3) and, in any case, a multiple of $\til u$, that is $w_2-w_1=\l\,\til u$ for some $\l\in\bR$. Also, it is clear that $(w_2-w_1)(t_0)=0$, but we have $\til u(t_0)\neq 0$ as a hypothesis, therefore $\l=0$ and $w_1=w_2$. This is, problem  \eqref{gpabconst} has a unique solution.\par
Assume now that $w$ is a solution of \eqref{gpabconst} and $\til u(t_0)=0$. Then $w+\l\,\til u$ is also a solution of \eqref{gpabconst} for every $\l\in\bR$, which proves the result.
\end{proof}

This last Theorem raises an obvious question: In which circumstances $\til u(t_0)\neq0$? In order to answer this question, it is enough to study the cases (C1)--(C3). We summarize this study in the following Lemma which can be checked easily.
\begin{lem}
 $\til u(t_0)=0$ only in the following cases,
 \begin{itemize}
\item if $a^2>b^2$ and $t_0=\frac{1}{\omega}\arctan\frac{\omega}{a+b}+k\pi$ for some $k\in\bZ$,
\item if $a^2<b^2$, $a\,b>0$ and $t_0=\frac{1}{\omega}\arctanh \frac{\omega}{a+b}$,
\item if $a=b$ and $t_0=\frac{1}{2a}$.
 \end{itemize}.
\end{lem}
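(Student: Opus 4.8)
The plan is to treat the statement as an exhaustive case analysis driven by the explicit formulas for $\til u$ obtained by setting $\a=1$ in the three regimes (C1)--(C3). Since these regimes partition all parameter pairs $(a,b)\in\bR^2$ according to the sign of $a^2-b^2$ (with the degenerate locus $a^2=b^2$ split into $a=b$ and $a=-b$), it suffices to locate the zeros of $\til u$ in each regime separately and then collect those for which a zero actually exists. First I would record the four explicit forms: $\til u(t)=\cos\omega t-\frac{a+b}{\omega}\sin\omega t$ in (C1), $\til u(t)=\cosh\omega t-\frac{a+b}{\omega}\sinh\omega t$ in (C2), $\til u(t)=1-2at$ in (C3.1), and $\til u(t)\equiv1$ in (C3.2).

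The two substantive regimes are (C1) and (C2). In (C1) the equation $\til u(t_0)=0$ reads $\cos\omega t_0=\frac{a+b}{\omega}\sin\omega t_0$; note that $a^2>b^2$ forces $a+b\ne0$, and one checks that $\cos\omega t_0\ne0$ at a zero (otherwise $\sin\omega t_0\ne0$ would force $a+b=0$). Dividing yields the tangent equation $\tan\omega t_0=\frac{\omega}{a+b}$, whose solution set is $\omega t_0\in\arctan\frac{\omega}{a+b}+\pi\bZ$, i.e. $t_0=\frac1\omega\arctan\frac{\omega}{a+b}+\frac{k\pi}{\omega}$ for $k\in\bZ$, the first listed case. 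In (C2), since $\cosh\omega t_0>0$ always, any zero needs $\sinh\omega t_0\ne0$, and the equation becomes $\tanh\omega t_0=\frac{\omega}{a+b}$ (again $a+b\ne0$ because $a^2<b^2$). As $\tanh$ is a strictly increasing bijection onto $(-1,1)$, this has a unique solution precisely when $\left|\frac{\omega}{a+b}\right|<1$, and then $t_0=\frac1\omega\arctanh\frac{\omega}{a+b}$.

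The one step that is not purely mechanical, and which I expect to be the main obstacle, is showing that the solvability condition $\omega<|a+b|$ from (C2) is exactly the condition $ab>0$ stated in the lemma. I would square it to $\omega^2<(a+b)^2$, substitute $\omega^2=b^2-a^2$, and simplify to $a(a+b)>0$. Under the standing hypothesis $a^2<b^2$ (so $|b|>|a|$), a short sign analysis gives $a(a+b)>0\iff ab>0$: if $a$ and $b$ share a sign then $|b|>|a|$ makes $a+b$ have the same sign as $a$, whence $a(a+b)>0$; conversely $a(a+b)>0$ excludes opposite signs, since $|b|>|a|$ would then force $a+b$ to be opposite in sign to $a$. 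Finally the degenerate regimes are immediate: in (C3.1) the affine function $1-2at$ vanishes iff $a\ne0$ and $t_0=\frac1{2a}$, while in (C3.2) $\til u\equiv1$ never vanishes and contributes no case. Collecting the outcomes over all regimes yields precisely the three listed possibilities.
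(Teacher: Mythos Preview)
Your proposal is correct and is exactly the approach the paper intends: the paper gives no proof beyond the remark that the lemma ``can be checked easily'' by inspecting the explicit solutions in cases (C1)--(C3), and your case-by-case analysis carries this out in full, including the nontrivial verification that the solvability condition $\omega<|a+b|$ in (C2) is equivalent to $ab>0$ under $|a|<|b|$.

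One small point worth flagging: in (C1) your computation yields $t_0=\tfrac{1}{\omega}\arctan\tfrac{\omega}{a+b}+\tfrac{k\pi}{\omega}$, whereas the lemma as printed has $+\,k\pi$; your formula is the correct one (the period of $\tan\omega t$ in $t$ is $\pi/\omega$, not $\pi$), so this appears to be a typographical slip in the statement rather than an error in your argument.
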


\begin{dfn} Let $t_1,t_2\in\bR$. We define the\textbf{ oriented characteristic function} of the pair $(t_1,t_2)$ as
$$\chi_{t_1}^{t_2}(t):=\begin{cases}1, & t_1\le t\le t_2\\-1, & t_2\le t< t_1\\0, & \text{otherwise.}\end{cases}$$
\end{dfn} 
\begin{rem} The previous definition implies that, for any given integrable function $f:\bR\to\bR$,
$$\int_{t_1}^{t_2}f(s)\dif s=\int_{-\infty}^{\infty}\chi_{t_1}^{t_2}(s)f(s)\dif s.$$
Also, $\chi_{t_1}^{t_2}=-\chi_{t_2}^{t_1}$.
\end{rem}
The following corollary gives us the expression of the Green's function for problem \eqref{gpabconst}.
\begin{cor} Suppose $\til u(t_0)\ne0$. Then the unique solution of problem \eqref{gpabconst} is given by
$$u(t):=\int_{-\infty}^\infty G(t,s)h(s)\dif s+\frac{c-\bar{u}(t_0)}{\til u(t_0)}\til u(t),\quad t\in\bR,$$
where
\begin{equation}\label{eqsolgen}G(t,s):=\frac{1}{2}\([\til u(-s)\til v(t)+\til v(-s)\til u(t)]\chi_{0}^t(s)+[\til u(-s)\til v(t)-\til v(-s)\til u(t)]\chi_{-t}^{0}(s)\),\quad t,s\in\bR.\end{equation}
\end{cor}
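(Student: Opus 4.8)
The plan is to take the abstract solution produced in Theorem~\ref{thmconstsoln} under hypothesis $(h1)$ and make it completely explicit in the first order case, matching it term by term with the claimed formula. By Corollary~\ref{corconstsoln} the hypothesis $\til u(t_0)\ne 0$ already guarantees that \eqref{gpabconst} has a unique solution, and that solution is precisely the function $w=\bar u+\frac{c-\bar u(t_0)}{\til u(t_0)}\til u$ built in the proof of the theorem. Since the last summand already coincides with the second term of the asserted expression, the whole problem reduces to showing that $\bar u(t)=\int_{-\infty}^\infty G(t,s)h(s)\dif s$ with $G$ as in \eqref{eqsolgen}.

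First I would specialize the construction to $n=1$. Here $\til\phi(t)=\int_0^t\phi(s)\dif s$ and $\til\psi(t)=\int_0^t\psi(s)\dif s$, so that $\bar u=\til\phi\,\til u+\til\psi\,\til v$ becomes $\bar u(t)=\int_0^t[\til u(t)\phi(s)+\til v(t)\psi(s)]\dif s$. By Lemma~\ref{auxlemuv}~$(III)$ the common denominator $\til u_e\til v_e-\til u_o\til v_o$ equals $1$, so $\phi=h_o\til v_e-h_e\til v_o$ and $\psi=h_e\til u_e-h_o\til u_o$. Substituting $h_e=\frac{1}{2}(h(s)+h(-s))$, $h_o=\frac{1}{2}(h(s)-h(-s))$ and using $\til v_e(s)\pm\til v_o(s)=\til v(\pm s)$, I expect to obtain the compact forms $\phi(s)=\frac{1}{2}[h(s)\til v(-s)-h(-s)\til v(s)]$ and $\psi(s)=\frac{1}{2}[h(s)\til u(-s)+h(-s)\til u(s)]$. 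Collecting the $h(s)$ and $h(-s)$ contributions in the integrand $\til u(t)\phi(s)+\til v(t)\psi(s)$ then splits $\bar u(t)$ into one integral over $[0,t]$ weighted by $h(s)$ and one weighted by $h(-s)$.

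The final step is the change of variable $s\mapsto-s$ in the $h(-s)$ integral, which turns it into an integral of $h$ over $[-t,0]$ and replaces $\til v(s),\til u(s)$ by $\til v(-s),\til u(-s)$; rewriting both resulting integrals over $\bR$ by means of the oriented characteristic function, through the identity $\int_{t_1}^{t_2}f=\int_{-\infty}^\infty\chi_{t_1}^{t_2}f$, should produce precisely the kernel \eqref{eqsolgen}, with $\chi_0^t$ multiplying $\til u(-s)\til v(t)+\til v(-s)\til u(t)$ and $\chi_{-t}^0$ multiplying $\til u(-s)\til v(t)-\til v(-s)\til u(t)$. The only delicate point is the sign bookkeeping through the even/odd decomposition together with the orientation of the limits of integration; the oriented characteristic function is tailored exactly to absorb the case $t<0$ automatically, so no separate case analysis should be needed. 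Uniqueness requires nothing beyond Corollary~\ref{corconstsoln}, and since $\til u,\til v$ are the explicit elementary functions of cases (C1)--(C3) and $h\in L^1_{\operatorname{loc}}(\bR)$, every integral involved is taken over a compact interval and is therefore well defined.
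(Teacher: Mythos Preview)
Your argument is correct and actually takes a different route from the paper. The paper \emph{verifies} the formula directly: it computes $u'(t)$ from \eqref{eqsolgen} (differentiating under the integral sign), then separately computes $a\,u(-t)+b\,u(t)$, and adds the two to recover $h(t)$ after some manipulation using \eqref{v1}--\eqref{v2}; only afterwards, to check the initial condition, does it identify the integral with $\bar u(t_0)$ via Theorem~\ref{thmconstsoln}. You instead \emph{derive} the formula: you start from the abstract solution $\bar u=\til\phi\,\til u+\til\psi\,\til v$ already known to satisfy $L\bar u=h$, specialise to $n=1$, use Lemma~\ref{auxlemuv}\,$(III)$ to kill the denominator, expand $\phi,\psi$ in terms of $h(s),h(-s)$, and then a change of variable plus the oriented characteristic function yields $G$. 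Your approach is shorter and avoids re-proving that the ODE is satisfied (Theorem~\ref{thmconstsoln} and Corollary~\ref{corconstsoln} already do that); the paper's direct verification, on the other hand, is self-contained and makes the role of the defining relations \eqref{v1}--\eqref{v2} for $\til u,\til v$ more visible in the computation.
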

\begin{proof} First observe that $G(t,\cdot)$ is bounded and of compact support for every fixed $t\in\bR$, so the integral $\int_{-\infty}^\infty G(t,s)h(s)\dif s$ is well defined. 
It is not difficult to verify, for any $t \in \bR$, the following equalities:
\begin{equation}
\label{equprima}
\begin{aligned}  
u'(t)-\frac{c-\bar{u}(t_0)}{\til u(t_0)}\til u'(t)  
= &\frac{1}{2}\left(\frac{\dif}{\dif t}\int_{0}^t\[\til u(-s)\til v(t)+\til v(-s)\til u(t)\]h(s)\dif s\right.\\
&\left. \;+\frac{\dif}{\dif t}\int_{-t}^{0}\[\til u(-s)\til v(t)-\til v(-s)\til u(t)\]h(s)\dif s\right)\\  
= &  \frac{1}{2}\left(\frac{\dif}{\dif t}\int_{0}^t \[\til u(-s)\til v(t)+\til v(-s)\til u(t)\]h(s)\dif s\right. \\
&\; \left.+\frac{\dif}{\dif t}\int_{0}^t\[\til u(s)\til v(t)-\til v(s)\til u(t)\]h(-s)\dif s\right)\\ = 
& \,h(t)+\frac{1}{2}\left(\int_{0}^t \[\til u(-s)\til v'(t)+\til v(-s)\til u'(t)\]h(s)\dif s\right.\\
& \left.  \; +\int_{0}^t\[\til u(s)\til v'(t)-\til v(s)\til u'(t)\]h(-s)\dif s\right).
\end{aligned}
\end{equation}
On the other hand,
\begin{equation}\label{eqrest}\begin{aligned} 
 a\bigg[u(-t)-&\frac{c-\bar{u}(t_0)}{\til u(t_0)}\til u(-t)\bigg]+b\bigg[u(t)-\frac{c-\bar{u}(t_0)}{\til u(t_0)}\til u(t)\bigg] \\ = & \frac{1}{2}a\int_{0}^{-t}\([\til u(-s)\til v(-t)+\til v(-s)\til u(-t)]h(s)+[\til u(s)\til v(-t)-\til v(s)\til u(-t)]h(-s)\)\dif s \\ & +  \frac{1}{2}b\int_{0}^{t}\([\til u(-s)\til v(t)+\til v(-s)\til u(t)]h(s)+[\til u(s)\til v(t)-\til v(s)\til u(t)]h(-s)\)\dif s \\ = & -\frac{1}{2}a\int_{0}^{t}\([\til u(s)\til v(-t)+\til v(s)\til u(-t)]h(-s)+[\til u(-s)\til v(-t)-\til v(-s)\til u(-t)]h(s)\)\dif s \\ & + \frac{1}{2} b\int_{0}^{t}\([\til u(-s)\til v(t)+\til v(-s)\til u(t)]h(s)+[\til u(s)\til v(t)-\til v(s)\til u(t)]h(-s)\)\dif s \\ = & \frac{1}{2} \int_{0}^{t}(-a[\til u(-s)\til v(-t)-\til v(-s)\til u(-t)]+b[\til u(-s)\til v(t)+\til v(-s)\til u(t)])h(s)\dif s\\  &+ \frac{1}{2} \int_{0}^{t}(-a[\til u(s)\til v(-t)+\til v(s)\til u(-t)]+b[\til u(s)\til v(t)-\til v(s)\til u(t)])h(-s)\dif s\\ = &
\frac{1}{2} \int_{0}^{t}(\til u(-s)[-a\til v(-t)+b\til v(t)]+\til v(-s)[a\til u(-t)+b\til u(t)]h(s)\dif s\\ &+  \frac{1}{2} \int_{0}^{t}(\til u(s)[-a\til v(-t)+b\til v(t)]-\til v(s)[a\til u(-t)+b\til u(t)])h(-s)\dif s\\ = &
-\frac{1}{2}\( \int_{0}^{t}(\til u(-s) \til v'(t)+\til v(-s)  \til u'(t))h(s)\dif s+ \int_{0}^{t}(\til u(s) \til v'(t)-\til v(s) \til u'(t))h(-s)\dif s\).
\end{aligned} \end{equation}
 Thus, adding \eqref{equprima} and \eqref{eqrest}, it is clear that $u'(t)+a\,u(-t)+b\,u(t)=h(t)$.\par
 We now check the initial condition.
 $$u(t_0)=c-\bar{u}(t_0)+\frac{1}{2}\int_{0}^{t_0}\([\til u(-s)\til v(t_0)+\til v(-s)\til u(t_0)]h(s)+[\til u(s)\til v(t_0)-\til v(s)\til u(t_0)]h(-s)\)\dif s.$$
 Using the construction of the solution provided in Theorem \ref{thmconstsoln}, it is an easy exercise to check that
 $$\bar{u}(t)=\frac{1}{2}\int_{0}^{t}\([\til u(-s)\til v(t)+\til v(-s)\til u(t)]h(s)+[\til u(s)\til v(t)-\til v(s)\til u(t)]h(-s)\)\dif s\sfa t\in\bR,$$
 which proves the result.
\end{proof}
Denote now $G_{a,b}$ the Green's function for problem \eqref{gpabconst} with coefficients $a$ and $b$. The following Lemma is analogous to \cite[Lemma 4.1]{Cab4}.
\begin{lem}\label{Gop} $ G_{a,b}(t,s)=-G_{-a,-b}(-t,-s), \quad \mbox{for all }  t,s\in I$.
\end{lem}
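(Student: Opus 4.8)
The plan is to reduce everything to the behaviour of the two fundamental solutions $\tilde u$ and $\tilde v$ under the change of parameters $(a,b)\mapsto(-a,-b)$, and then to substitute directly into the explicit formula \eqref{eqsolgen} for the Green's function. I write $\tilde u_{a,b}$, $\tilde v_{a,b}$ for the solutions of \eqref{v1}--\eqref{v2} associated with the coefficients $a,b$, so that $G_{a,b}$ is assembled from these via \eqref{eqsolgen}.

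First I would establish the key symmetry
\[
\tilde u_{-a,-b}(t)=\tilde u_{a,b}(-t),\qquad \tilde v_{-a,-b}(t)=\tilde v_{a,b}(-t),\qquad t\in\bR.
\]
To prove the first identity, set $w(t):=\tilde u_{a,b}(-t)$. Evaluating \eqref{v1} at $-t$ gives $\tilde u_{a,b}'(-t)=-a\,\tilde u_{a,b}(t)-b\,\tilde u_{a,b}(-t)$, whence $w'(t)=-\tilde u_{a,b}'(-t)=a\,w(-t)+b\,w(t)$; that is, $w'(t)-a\,w(-t)-b\,w(t)=0$ with $w(0)=\tilde u_{a,b}(0)=1$, which is exactly the defining problem \eqref{v1} read off with the coefficients $-a,-b$. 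Since the homogeneous equation \eqref{heabconst} has a one-dimensional solution space (the cases (C1)--(C3)) and the normalisation at $0$ fixes the free constant, such a solution is unique and $w=\tilde u_{-a,-b}$. The computation for $\tilde v$ is identical, using \eqref{v2} in place of \eqref{v1}.

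Next I would insert these relations into \eqref{eqsolgen}. Writing out $G_{-a,-b}(t,s)$ in terms of $\tilde u_{-a,-b},\tilde v_{-a,-b}$, then replacing these by $\tilde u_{a,b}(-\,\cdot\,),\tilde v_{a,b}(-\,\cdot\,)$, and finally evaluating the arguments at $(-t,-s)$, a short computation shows that each amplitude bracket of $G_{-a,-b}(-t,-s)$ reduces exactly to the corresponding amplitude bracket of $G_{a,b}(t,s)$ (for instance $\tilde u_{-a,-b}(-s)\,\tilde v_{-a,-b}(t)$ becomes $\tilde u_{a,b}(-s)\,\tilde v_{a,b}(t)$ after the two operations). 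It then remains only to track the oriented characteristic functions: under $s\mapsto -s$, $t\mapsto -t$ one has $\chi_{0}^{-t}(-s)=\chi_{t}^{0}(s)=-\chi_{0}^{t}(s)$ and $\chi_{t}^{0}(-s)=\chi_{0}^{-t}(s)=-\chi_{-t}^{0}(s)$, where I use the reflection identity $\chi_{t_1}^{t_2}(-s)=\chi_{-t_2}^{-t_1}(s)$ together with the stated rule $\chi_{t_1}^{t_2}=-\chi_{t_2}^{t_1}$. Both characteristic factors thus contribute an overall sign $-1$, and hence $G_{-a,-b}(-t,-s)=-G_{a,b}(t,s)$, which is the claim.

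The main obstacle I anticipate is the bookkeeping of the oriented characteristic functions rather than the algebra of the amplitudes: one must verify $\chi_{t_1}^{t_2}(-s)=\chi_{-t_2}^{-t_1}(s)$ straight from the definition and check that the orientation (the sign carried by $\chi$) flips correctly in \emph{both} the $\chi_0^t$ and the $\chi_{-t}^0$ terms, since a mismatch there would spoil the global sign. These equalities hold for every $s$ away from the finitely many endpoints $s\in\{0,\pm t\}$, which is harmless because $G$ enters only through integration against $h$; alternatively the endpoints are settled by the stated conventions. Everything else is a direct substitution once the parameter symmetry of $\tilde u,\tilde v$ is in hand.
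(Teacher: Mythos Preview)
Your argument is correct, but it is genuinely different from the paper's. The paper does not touch the explicit formula \eqref{eqsolgen} at all: it argues at the level of the operator. Starting from $u(t)=\int G_{a,b}(t,s)h(s)\,\dif s$ solving $u'(t)+a\,u(-t)+b\,u(t)=h(t)$, it sets $v(t):=-u(-t)$, checks that $v'(t)-a\,v(-t)-b\,v(t)=h(-t)$, and hence $v(t)=\int G_{-a,-b}(t,s)h(-s)\,\dif s$; on the other hand $v(t)=-\int G_{a,b}(-t,-s)h(-s)\,\dif s$ by a change of variable, and since $h$ is arbitrary the two kernels must agree. Your route instead establishes the parameter symmetry $\tilde u_{-a,-b}(t)=\tilde u_{a,b}(-t)$, $\tilde v_{-a,-b}(t)=\tilde v_{a,b}(-t)$ and then substitutes into \eqref{eqsolgen}, tracking the oriented characteristic functions. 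The paper's proof is shorter and more conceptual (it would survive even if one did not have \eqref{eqsolgen} in closed form), whereas your computation is more hands-on and yields the identity pointwise rather than via a duality argument; it also makes transparent exactly why the sign flips, namely through the orientation reversal $\chi_0^{-t}(-s)=-\chi_0^t(s)$.
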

\begin{proof}
Let $u(t):=\int_{-\infty}^\infty G_{a,b}(t,s)h(s)\dif s$ be a solution to $u'(t)+a\,u(-t)+b\,u(t)=h(t)$. Let $v(t):=-u(-t)$. Then $v'(t)-a\,v(-t)-b\,v(t)=h(-t)$, and therefore $v(t)=\int_{-\infty}^\infty G_{-a,-b}(t,s)h(-s)\dif s$. On the other hand, by definition of $v$,
$$v(t)=-\int_{-\infty}^\infty G_{a,b}(-t,s)h(s)\dif s=-\int_{-\infty}^\infty G_{a,b}(-t,-s)h(-s)\dif s,$$
therefore we can conclude that $ G_{a,b}(t,s)=- G_{-a,-b}(-t,-s)$ for all $t,\;s\in I$.
\end{proof}

As a consequence of the previous result, we arrive at the following immediate conclusion.

\begin{cor} $G_{a,b}$ is positive if and only if $ G_{-a,-b}$ is negative on $I^2$.
\end{cor}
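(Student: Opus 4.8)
The plan is to obtain the statement as a direct consequence of the pointwise identity in Lemma~\ref{Gop}, namely $G_{a,b}(t,s)=-G_{-a,-b}(-t,-s)$ for all $t,s\in I$. The key structural observation is that, because problem \eqref{gpabconst} involves the reflection $t\mapsto -t$, its natural domain $I$ is symmetric about the origin, so that the map $(t,s)\mapsto(-t,-s)$ is a bijection of $I^2$ onto itself. This is the one point worth isolating, since the whole argument rests on it.

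First I would assume that $G_{a,b}$ is positive on $I^2$, i.e. $G_{a,b}(t,s)>0$ for every $(t,s)\in I^2$. Fixing an arbitrary $(t',s')\in I^2$ and writing $(t,s)=(-t',-s')$, which again lies in $I^2$ by the symmetry just noted, Lemma~\ref{Gop} gives $G_{-a,-b}(t',s')=G_{-a,-b}(-t,-s)=-G_{a,b}(t,s)<0$. Since $(t',s')$ was arbitrary, $G_{-a,-b}$ is negative on $I^2$.

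For the converse I would run exactly the same computation in the opposite direction. The identity of Lemma~\ref{Gop} is symmetric under interchanging the coefficient pairs $(a,b)$ and $(-a,-b)$, so assuming $G_{-a,-b}<0$ on $I^2$ yields $G_{a,b}(t',s')=-G_{-a,-b}(-t',-s')>0$ for all $(t',s')\in I^2$, i.e. $G_{a,b}$ is positive on $I^2$. The two implications together give the stated equivalence.

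The proof involves no real obstacle: there are no estimates or case analyses, only the sign-flipping identity together with the reflection symmetry of the domain. If any step needs care, it is simply recording that $I=-I$, so that evaluating the Green's functions at $(-t,-s)$ keeps the argument within $I^2$; everything else is a one-line substitution into the formula of Lemma~\ref{Gop}.
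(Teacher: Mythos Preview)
Your proof is correct and is exactly the immediate consequence of Lemma~\ref{Gop} that the paper has in mind; the paper does not even spell out a proof, simply stating the corollary as a direct consequence. Your only addition is making explicit the (necessary) observation that $I=-I$, so that $(t,s)\mapsto(-t,-s)$ is a bijection of $I^2$, which is indeed the single point worth recording.
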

\section{Sign of the Green's Function}

In this section we use the above obtained expressions to obtain the explicit expression of the Green's function, depending on the values of the constants $a$ and $b$. Moreover we study the sign of the function and deduce suitable comparison results.

We separate the study in three cases, taking into consideration the expression of the general solution of equation \eqref{heabconst}.

\subsection{The case (C1)}
Now, assume the case $(C1)$, i.e., $a^2 > b^2$. Using equation \eqref{eqsolgen}, we get the following expression of $G$ for this situation:

$$G(t,s)=\[\cos(\omega (s - t)) + \frac{b}{\omega}\sin(\omega(s-t))\]\chi_0^t(s)+\frac{a}{\omega}\sin(\omega(s+t))\chi_{-t}^0(s),$$
which we can rewrite as
\begin{subequations}
\begin{empheq}[left={G(t,s)=\empheqlbrace}]{align}
   &  \cos\omega (s - t) + \frac{b}{\omega}\sin\omega(s-t), &  0\le s \le t, \label{G1} \\
     & \label{G2} -\cos\omega (s - t) - \frac{b}{\omega}\sin\omega(s-t), & t\le s \le 0, \\ 
     & \frac{a}{\omega}\sin\omega(s+t), &  -t\le s \le 0, \label{G3} \\
     & \label{G4} -\frac{a}{\omega}\sin\omega(s+t), &  0\le s \le -t, \\
     &      0, & \text{otherwise.}
 \end{empheq}
\end{subequations}\par
Studying the expression of $G$ we can obtain maximum and antimaximum principles. In order to do this, we will be interested in those maximal strips (in the sense of inclusion) of the kind $[\a,\b]\times\bR$ where $G$ does not change sign depending on the parameters.

So, we are in a position to study the sign of the Green's function in the different triangles of definition. The result is the following:

\begin{lem}\label{lempma1} Assume $a^2>b^2$ and define
$$\eta(a,b):=
\left\{
\begin{array}{lll}
\frac{1}{\sqrt{a^2-b^2}}\arctan \frac{\sqrt{a^2-b^2}}{b}, & \mbox{if}  & b>0,\\
\frac{\pi}{2|a|}, & \mbox{if}  & b=0,\\
\frac{1}{\sqrt{a^2-b^2}}\left(\arctan \frac{\sqrt{a^2-b^2}}{b}+\pi\right), & \mbox{if} & b<0.
\end{array}
\right.$$

 Then, the Green's function of problem \eqref{gpabconst} is
 \begin{itemize}
 \item  positive on $\{(t,s), \; 0<s<t\}$ if and only if $t \in (0,\eta(a,b))$,
 \item  negative on $\{(t,s), \; t<s<0\}$ if and only if $t \in (-\eta(a,-b),0)$.
 \end{itemize}
 If $a>0$, the Green's function of problem \eqref{gpabconst} is
\begin{itemize}
\item  positive on $\{(t,s), \; -t<s<0\}$ if and only if $t \in (0,\pi/ \sqrt{a^2-b^2})$,
\item  positive on $\{(t,s), \; 0<s<-t\}$ if and only if $t \in (-\pi/ \sqrt{a^2-b^2},0)$,
\end{itemize}
and, if $a<0$, the Green's function of problem \eqref{gpabconst} is
\begin{itemize}
\item  negative on $\{(t,s), \; -t<s<0\}$ if and only if $t \in (0,\pi/ \sqrt{a^2-b^2})$,
\item  negative on $\{(t,s), \; 0<s<-t\}$ if and only if $t \in (-\pi/ \sqrt{a^2-b^2},0)$.
\end{itemize}
\end{lem}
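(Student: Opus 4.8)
The plan is to treat the four triangles of \eqref{G1}--\eqref{G4} one at a time, since on each of them $G$ depends on the pair $(t,s)$ only through a single scalar, which turns the sign question into a one–dimensional one. On the triangles $\{0\le s\le t\}$ and $\{t\le s\le 0\}$ the relevant expression is $\cos\omega(s-t)+\tfrac{b}{\omega}\sin\omega(s-t)$ (with $\omega^2=a^2-b^2>0$), which I would collapse into a single cosine of amplitude $|a|/\omega$ by harmonic addition; on the triangles $\{-t\le s\le 0\}$ and $\{0\le s\le -t\}$ the function is already the pure sine $\pm\tfrac{a}{\omega}\sin\omega(s+t)$, and the sign can be read off directly. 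I would prove the statements for the two ``upper'' triangles ($t>0$) and then obtain the two ``lower'' ones ($t<0$) for free from the symmetry $G_{a,b}(t,s)=-G_{-a,-b}(-t,-s)$ of Lemma~\ref{Gop}.

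For $\{0<s<t\}$ I would introduce $r:=t-s\in(0,t)$ and, using that cosine is even and sine odd, rewrite $G$ as $\cos\omega r-\tfrac{b}{\omega}\sin\omega r=\tfrac{|a|}{\omega}\cos(\omega r+\psi)$ with $\psi:=\arctan(b/\omega)\in(-\pi/2,\pi/2)$, noting that $G=1>0$ at $r=0$. As $r$ increases the argument $\omega r+\psi$ first reaches $\pi/2$, so the first positive zero of $G$ sits at $r=\tfrac{1}{\omega}\bigl(\tfrac{\pi}{2}-\arctan\tfrac{b}{\omega}\bigr)$, and $G>0$ strictly before it. The key point is then to check that this value equals $\eta(a,b)$: this is exactly the content of the complementary–angle identities $\arctan x+\arctan(1/x)=\tfrac{\pi}{2}$ for $x>0$ and $=-\tfrac{\pi}{2}$ for $x<0$, which is precisely why $\eta$ must be defined by three separate cases according to the sign of $b$ (the $+\pi$ in the case $b<0$ coming from the second identity, and the case $b=0$ giving $\pi/(2|a|)$). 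Since $r=t-s$ ranges up to $t$, the sign of $G$ on the whole triangle stays positive exactly while $t$ remains below this first zero, i.e. for $t\in(0,\eta(a,b))$.

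For $\{-t<s<0\}$ (so $t>0$) I would set $\sigma:=s+t\in(0,t)$, whence $G=\tfrac{a}{\omega}\sin\omega\sigma$; the first zero of $\sin\omega\sigma$ after the origin is at $\sigma=\pi/\omega=\pi/\sqrt{a^2-b^2}$, and on $(0,\pi/\omega)$ the sign of $G$ is that of $a$. Hence $G$ is positive (if $a>0$) or negative (if $a<0$) on the whole triangle exactly for $t\in(0,\pi/\sqrt{a^2-b^2})$. The remaining two triangles, $\{t<s<0\}$ and $\{0<s<-t\}$, I would not analyse afresh: applying $G_{a,b}(t,s)=-G_{-a,-b}(-t,-s)$ sends each of them onto $\{0<s'<t'\}$, respectively $\{-t'<s'<0\}$, for the parameters $(-a,-b)$, and since $\eta$ and $\pi/\omega$ depend on $a$ only through $a^2$ one has $\eta(-a,-b)=\eta(a,-b)$. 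This converts ``$G_{-a,-b}$ positive on $0<s'<t'<\eta(-a,-b)$'' into ``$G_{a,b}$ negative on $t<s<0$ with $-t<\eta(a,-b)$'', giving the stated interval $(-\eta(a,-b),0)$, and similarly produces the sign statements on $\{0<s<-t\}$.

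The main obstacle is the arctangent bookkeeping of the second paragraph: one must verify, separately for $b>0$, $b=0$ and $b<0$, that the geometric first positive zero of the sinusoid coincides with the piecewise formula for $\eta$, and — equally important — that the sign of $G$ just before that zero is the claimed one rather than its opposite, so that the correct branch ($+\pi$ or not) is selected. A secondary, purely technical point is the treatment of the endpoints: $G$ vanishes at the corner on the line $s=0$ exactly when $t$ equals the first zero, which is what forces the open intervals $(0,\eta(a,b))$ and $(0,\pi/\sqrt{a^2-b^2})$ rather than closed ones. Everything else is a routine one–variable zero count.
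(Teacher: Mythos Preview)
Your proposal is correct and follows the same direct-inspection strategy as the paper, which (for one representative sign configuration $0<b<a$) simply observes that \eqref{G3} is positive while $t<\pi/\omega$ and that \eqref{G1} is positive while $t<\eta(a,b)$, and then declares that ``the rest of the proof continues similarly.'' Your write-up is more explicit --- the harmonic-addition rewrite and the verification, via the complementary-angle identities, that the first positive zero of \eqref{G1} coincides with the piecewise formula for $\eta$ --- and it is organized a bit more economically: rather than repeating the analysis on the two lower triangles, you invoke the symmetry $G_{a,b}(t,s)=-G_{-a,-b}(-t,-s)$ of Lemma~\ref{Gop} (together with $\eta(-a,-b)=\eta(a,-b)$) to transport the results. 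The paper does not use that lemma here; it just checks each piece by hand.
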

\begin{proof}For $0<b<a$, the argument of the $\sin$ in \eqref{G3} is positive, so \eqref{G3} is positive for $t<\pi/\omega$. On the other hand, it is easy to check that \eqref{G1} is positive as long as $t<\eta(a,b)$.\par
The rest of the proof continues similarly.
\end{proof}

As a corollary of the previous result we obtain the following one:

\begin{lem}\label{lempma1} Assume $a^2>b^2$. Then,
\begin{itemize}
\item if $a>0$, the Green's function of problem \eqref{gpabconst} is non-negative on $[0,\eta(a,b)]\times\bR$,
\item if $a<0$, the Green's function of problem \eqref{gpabconst} is non-positive on $[-\eta(a,-b),0]\times\bR$,
\item the Green's function of problem \eqref{gpabconst} changes sign in any other strip not a subset of the aforementioned.
\end{itemize}
\end{lem}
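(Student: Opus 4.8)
The plan is to obtain this statement as a direct corollary of the preceding (pointwise) lemma together with the symmetry relation of Lemma \ref{Gop}, so that no fresh manipulation of the explicit formulas \eqref{G1}--\eqref{G4} is needed beyond one elementary estimate on $\eta$. I would organize the argument around three claims: non-negativity on the prescribed strip when $a>0$, the mirror statement for $a<0$, and maximality (genuine sign change outside the strip).

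For $a>0$ I would fix $t\in(0,\eta(a,b))$ and note that $G(t,\cdot)$ is supported in $[-t,t]$, being given by \eqref{G1} on $0\le s\le t$ and by \eqref{G3} on $-t\le s\le 0$. The previous lemma makes \eqref{G1} positive on $\{0<s<t\}$ precisely because $t<\eta(a,b)$. For \eqref{G3} I would first record the elementary inequality $\eta(a,b)<\pi/\omega$, immediate from the three branches of $\eta$ (since $\arctan(\omega/b)<\pi/2$ when $b>0$, $\eta=\pi/(2\omega)$ when $b=0$, and $\arctan(\omega/b)+\pi<\pi$ when $b<0$); hence $(0,\eta(a,b))\subset(0,\pi/\omega)$ and \eqref{G3} is positive on $\{-t<s<0\}$ by the previous lemma. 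Thus $G(t,\cdot)\ge 0$ for each such $t$. At $t=0$ one has $G\equiv 0$, and at the endpoint $t=\eta(a,b)$ I would check directly that \eqref{G1} stays $\ge 0$ (its argument keeps $\tau=t-s$ inside $[0,\eta(a,b)]$, before the first sign change) and \eqref{G3} stays $\ge 0$ (its argument $\omega(s+t)$ lies in $[0,\omega\eta(a,b)]\subset[0,\pi]$). This gives $G\ge 0$ on the closed strip $[0,\eta(a,b)]\times\bR$.

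For $a<0$ I would avoid redoing the analysis and instead invoke Lemma \ref{Gop}, namely $G_{a,b}(t,s)=-G_{-a,-b}(-t,-s)$. Since $-a>0$, the previous paragraph applied to $G_{-a,-b}$ yields non-negativity on $[0,\eta(-a,-b)]\times\bR$. Because $\eta$ depends on its first argument only through $\omega=\sqrt{a^2-b^2}$ and $|a|$, one has $\eta(-a,-b)=\eta(a,-b)$, and the change of variables $(t,s)\mapsto(-t,-s)$ converts this into non-positivity of $G_{a,b}$ on $[-\eta(a,-b),0]\times\bR$.

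Finally, for maximality I would show that any strip $[\alpha,\beta]\times\bR$ not contained in $[0,\eta(a,b)]\times\bR$ (taking $a>0$; the case $a<0$ is symmetric) forces a sign change, so either $\alpha<0$ or $\beta>\eta(a,b)$ must be ruled out. If $\beta>\eta(a,b)$, I would pick $t\in(\eta(a,b),\beta]$ and write \eqref{G1} as $\cos\omega\tau-\frac{b}{\omega}\sin\omega\tau$ with $\tau=t-s\in[0,t]$: this equals $1$ at $\tau=0$ and has its first positive zero exactly at $\tau=\eta(a,b)$, crossing transversally, so \eqref{G1} is strictly positive near $s=t$ and strictly negative for $\tau$ slightly above $\eta(a,b)$ (which lies in $[0,t]$). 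If $\alpha<0$, I would pick $t\in(-\min(\eta(a,-b),\pi/\omega),0)$, for which the previous lemma makes \eqref{G2} negative on $\{t<s<0\}$ while, since $a>0$, \eqref{G4} is positive on $\{0<s<-t\}$, so $G(t,\cdot)$ again takes both signs. I expect the main obstacle to be exactly this last step, namely certifying a \emph{genuine} sign change rather than mere vanishing; this is why I would lean on the transversality of the first zero of \eqref{G1} for $t>\eta(a,b)$ and on the opposite signs of the two supporting triangles for negative $t$ furnished by the previous lemma.
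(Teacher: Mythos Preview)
Your proposal is correct and essentially matches the paper's approach: both deduce the result as a corollary of the preceding pointwise lemma via the inequality $\eta(a,b)<\pi/\omega$. You supply more detail (invoking Lemma~\ref{Gop} for $a<0$ and spelling out the maximality step), and your bound $\eta<\pi/\omega$ is in fact the right one---the paper's stated intermediate inequality $\eta(a,b)\le\pi/(2\omega)$ actually fails when $b<0$, though the weaker inequality $\eta<\pi/\omega$, which is all that is needed, remains valid.
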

\begin{proof} The proof follows from the previous result together with the fact that
$$\eta(a,b)\le\frac{\pi}{2\omega}<\frac{\pi}{\omega}.$$
\end{proof}
\begin{rem} Realize that the rectangles defined in the previous Lemma are optimal in the sense that $G$ changes sign in a bigger rectangle. The same observation applies to the similar results we will prove for the other cases.  This fact implies that we cannot have maximum or anti-maximum principles on bigger intervals for the solution, something that is widely known and which the following results, together with Example \ref{exacrit} illustrate.
\end{rem}

Since $G(t,0)$ changes sign at $t = \eta(a,b)$. It is immediate to verify that by defining function $h_\epsilon(s)=1$ for all $s\in (-\epsilon,\epsilon)$ and $h(s)=0$ otherwise, we have a solution of problem \eqref{gpabconst} that cross the real value $c$ on the right of $\eta(a,b)$. So the estimates are optimal for this case.

However, one can study problems with particular non homogeneous part $h$ for which the solution has over $c$  for a bigger interval. This is showed in the following example.

\begin{exa}
Consider the problem $x'(t)-5 x(-t)+4 x(t)=\cos^2 3t$, $x(0)=0$.\par Clearly, we are in the case (C1).
For this problem,
\begin{align*}\bar{u}(t):= &\int_0^t\[\cos(3 (s - t)) +\frac{4}{3}\sin(3(s-t))\]\cos^2 3s\dif s-\frac{5}{3}\int_{-t}^0\sin(3(s+t))\dif s \\ = & \frac{1}{18}\(6\cos 3t+3\cos 6t +2\sin 3t+2\sin 6t-9\).
\end{align*}
$\bar{u}(0)=0$, so $\bar{u}$ is the solution of our problem.\par
Studying $\bar{u}$, we can arrive to the conclusion that $\bar{u}$ is non-negative in the interval $[0,\c]$, being zero at both ends of the interval and
$$\c=\frac{1}{3}\arccos\(\frac{1}{39}\[\sqrt[3]{47215-5265\sqrt{41}}+\sqrt[3]{5\(9443+1053\sqrt{41}\)}-35\]\)=0.201824\dots$$
Also, $\bar{u}(t)<0$ for $t=\c+\e$ with $\e\in\bR^+$ sufficiently small. Furthermore, the solution is periodic of period $2\pi/3$.
\end{exa}
\begin{figure}[h!t]
\center{\includegraphics[width=.5\textwidth]{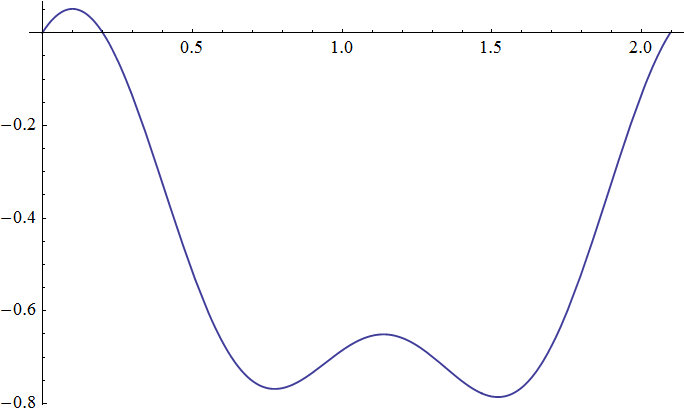}}\caption{Graph of the function $\bar{u}$ on the interval $[0,2\pi/3]$. Observe that $\bar{u}$ is positive on $(0,\c)$ and negative on $(\c,2\pi/3)$.}
\end{figure}\par
If we use Lemma \ref{lempma1}, we have that, a priori, $\bar{u}$ is non-positive on $[-4/15,0]$ which we know is true by the study we have done of $\bar{u}$, but this estimate is, as expected, far from the interval $[\c-1,0]$ in which $\bar{u}$ is non-positive. This does not contradict the optimality of the a priori estimate, as we have showed before, some other examples could be found for which the interval where the solution has constant is arbitrarily close to the  one given by the a priori estimate.
\subsection{The case (C2)}
We study here the case (C2). In this case, it is clear that
$$G(t,s)=\[\cosh(\omega (s - t)) + \frac{b}{\omega}\sinh(\omega(s-t))\]\chi_0^t(s)+\frac{a}{\omega}\sinh(\omega(s+t))\chi_{-t}^0(s),$$
which we can rewrite as
\begin{subequations}
\begin{empheq}[left={G(t,s)=\empheqlbrace}]{align}
   &  \cosh\omega (s - t) + \frac{b}{\omega}\sinh\omega(s-t), &  0\le s \le t, \label{G1b} \\
     & \label{G2b} -\cosh\omega (s - t) - \frac{b}{\omega}\sinh\omega(s-t), & t\le s \le 0, \\ 
     & \frac{a}{\omega}\sinh\omega(s+t), &  -t\le s \le 0, \label{G3b} \\
     & \label{G4b} -\frac{a}{\omega}\sinh\omega(s+t), &  0\le s \le -t, \\
       &    0, & \text{otherwise.}
 \end{empheq}
\end{subequations}
Studying the expression of $G$ we can obtain maximum and antimaximum principles.
With this information, we can state the following Lemma.

\begin{lem}\label{lempma2} Assume $a^2<b^2$ and define
$$\sigma(a,b):=\frac{1}{\sqrt{b^2-a^2}}\arctanh \frac{\sqrt{b^2-a^2}}{b}.$$
Then, 
\begin{itemize}
\item if $a>0$, the Green's function of problem \eqref{gpabconst} is positive on $\{(t,s), \; -t<s<0\}$ and $\{(t,s), \; 0<s<-t\}$,
\item  if $a<0$, the Green's function of problem \eqref{gpabconst} is negative on $\{(t,s), \; -t<s<0\}$ and $\{(t,s), \; 0<s<-t\}$,
\item if $b>0$, the Green's function of problem \eqref{gpabconst} is negative on $\{(t,s), \; t<s<0\}$,
\item if $b>0$, the Green's function of problem \eqref{gpabconst} is positive on $\{(t,s), \; 0<s<t\}$ if and only if $t \in (0,\sigma(a,b))$,
\item  if $b<0$, the Green's function of problem \eqref{gpabconst} is positive on $\{(t,s), \; 0<s<t\}$,
\item  if $b<0$, the Green's function of problem \eqref{gpabconst} is negative on $\{(t,s), \; t<s<0\}$ if and only if $t \in (\sigma(a,b),0)$.

\end{itemize}
\end{lem}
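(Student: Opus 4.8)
The plan is to establish each of the six assertions separately, in every case by reading off the relevant branch of $G$ among \eqref{G1b}--\eqref{G4b} according to which of the four triangles the region occupies, and then determining the sign of that explicit expression. Throughout I set $\omega=\sqrt{b^2-a^2}>0$ and record the inequality $0<\omega<|b|$ (valid whenever $a\neq0$), equivalently $|b|/\omega>1$. This single fact is what separates the ``unconditional'' sign statements from the two that carry a threshold in $t$, and it is the only structural feature special to case (C2).

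The two bullets governed by the sign of $a$ are immediate. On $\{-t<s<0\}$ one has $t>0$ and branch \eqref{G3b} applies, $G=\tfrac{a}{\omega}\sinh\omega(s+t)$; since $s+t>0$ there, $\sinh\omega(s+t)>0$ and $G$ inherits the sign of $a$. On $\{0<s<-t\}$ one has $t<0$, branch \eqref{G4b} gives $G=-\tfrac{a}{\omega}\sinh\omega(s+t)$ with $s+t<0$, so $\sinh\omega(s+t)<0$ and again $G$ has the sign of $a$. The two unconditional $b$-bullets are equally direct: for $b>0$ on $\{t<s<0\}$ branch \eqref{G2b} has argument $s-t>0$, so $\cosh\omega(s-t)>0$ and $\tfrac{b}{\omega}\sinh\omega(s-t)>0$, whence $G<0$; for $b<0$ on $\{0<s<t\}$ branch \eqref{G1b} has argument $s-t<0$, so $\sinh\omega(s-t)<0$ combines with $\tfrac{b}{\omega}<0$ to make that term positive, giving $G>0$. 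None of these four require a threshold.

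The content of the lemma is in the remaining two bullets, where the two summands of the branch compete. Here I introduce $g(r):=\cosh\omega r+\tfrac{b}{\omega}\sinh\omega r$, so that on $\{0<s<t\}$ (branch \eqref{G1b}) $G=g(s-t)$, while on $\{t<s<0\}$ (branch \eqref{G2b}) $G=-g(s-t)$. First I note $g(0)=1>0$, and that $g$ is strictly monotone on $\bR$: its derivative $g'(r)=\omega\sinh\omega r+b\cosh\omega r$ keeps the sign of $b$, because $|\omega\sinh\omega r|=\omega|\tanh\omega r|\cosh\omega r<\omega\cosh\omega r<|b|\cosh\omega r$. Hence $g$ has a unique zero, located from $\tanh\omega r=-\omega/b$ (well defined since $|\omega/b|<1$) at $r_\ast=\tfrac1\omega\arctanh(-\tfrac{\omega}{b})=-\tfrac1\omega\arctanh\tfrac{\omega}{b}=-\sigma(a,b)$. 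Translating through $r=s-t$: for $b>0$ ($g$ increasing, $r_\ast<0$) the region $\{0<s<t\}$ is $r\in(-t,0)$, on which $g>0$ throughout exactly when the left endpoint satisfies $-t\ge r_\ast$, i.e. $t<\sigma(a,b)$; for $b<0$ ($g$ decreasing, $r_\ast>0$) the region $\{t<s<0\}$ is $r\in(0,-t)$, on which $g>0$ throughout exactly when $-t\le r_\ast=-\sigma(a,b)$, i.e. $t>\sigma(a,b)$, and since $G=-g(s-t)$ there this reads $G<0$.

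I expect the only delicate point to be the bookkeeping of this last paragraph: correctly identifying which endpoint of the $r$-interval is the active one in each case, confirming the single sign change via the monotonicity of $g$, and reconciling the signs of $\sigma(a,b)$ for $b<0$ using the oddness of $\arctanh$ (so that $r_\ast=-\sigma(a,b)>0$). The boundary value $t=\sigma(a,b)$ should be handled by noting the zero of $g$ then falls at the excluded endpoint, consistent with the open intervals in the statement. Everything else reduces to the elementary positivity of $\cosh$ and the sign of $\sinh$ on each half-line.
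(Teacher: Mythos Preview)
Your proof is correct and follows the same approach as the paper: a branch-by-branch sign analysis of \eqref{G1b}--\eqref{G4b} on the four triangles. The paper's own proof is only a brief sketch (it checks the case $0<a<b$ and says ``the rest continues similarly''), so your treatment of the threshold bullets via the auxiliary function $g(r)=\cosh\omega r+\tfrac{b}{\omega}\sinh\omega r$, its monotonicity, and its unique zero at $r_\ast=-\sigma(a,b)$ actually supplies the details the paper omits.
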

\begin{proof}For $0<a<b$, he argument of the $\sinh$ in \eqref{G4} is negative, so \eqref{G4b} is positive. The argument of the $\sinh$ in \eqref{G3} is positive, so \eqref{G3b} is positive. It is easy to check that \eqref{G1b} is positive as long as $t<\sigma(a,b)$.\par
On the other hand, \eqref{G2b} is always negative.\par
The rest of the proof continues similarly.
\end{proof}

As a corollary of the previous result we obtain the following one:

\begin{lem}\label{lempma} Assume  $a^2<b^2$. Then,
\begin{itemize}
\item if $0<a<b$, the Green's function of problem \eqref{gpabconst} is non-negative on $[0,\sigma(a,b)]\times\bR$,
\item if $b<-a<0$, the Green's function of problem \eqref{gpabconst} is non-negative on $[0,+\infty)\times\bR$,
\item if $b<a<0$, the Green's function of problem \eqref{gpabconst} is non-positive on $[\sigma(a,b),0]\times\bR$,
\item if $b>-a>0$, the Green's function of problem \eqref{gpabconst} is non-positive on $(-\infty,0]\times\bR$,
\item the Green's function of problem \eqref{gpabconst} changes sign in any other strip not a subset of the aforementioned.
\end{itemize}
\end{lem}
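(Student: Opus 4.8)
The plan is to deduce everything from the sign information on the four open triangular regions already established in Lemma \ref{lempma2}, using the fact that for fixed $t$ the section $G(t,\cdot)$ is supported on $(-|t|,|t|)$ and is split by $s=0$ into the two half-ranges appearing in \eqref{G1b}--\eqref{G4b}. Concretely, for $t>0$ the support decomposes into $(0,t)$, governed by \eqref{G1b}, and $(-t,0)$, governed by \eqref{G3b}; for $t<0$ it decomposes into $(t,0)$, governed by \eqref{G2b}, and $(0,-t)$, governed by \eqref{G4b}. A strip $[\alpha,\beta]\times\bR$ is then sign-definite precisely when, for every $t\in[\alpha,\beta]$, both active half-ranges carry the same weak sign, so the whole lemma reduces to bookkeeping of the biconditionals in Lemma \ref{lempma2}.

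First I would treat the case $0<a<b$, where $a>0$ and $b>0$. By Lemma \ref{lempma2} the region $\{-t<s<0\}$ is positive, while $\{0<s<t\}$ is positive exactly for $t\in(0,\sigma(a,b))$; hence for every such $t$ both half-ranges are positive and $G(t,\cdot)\ge0$, and together with $G(0,\cdot)\equiv0$ and continuity at the endpoint this gives non-negativity on $[0,\sigma(a,b)]\times\bR$. For $b<-a<0$, that is $a>0$, $b<0$, the same lemma makes $\{-t<s<0\}$ and $\{0<s<t\}$ both positive for \emph{all} $t>0$ with no restriction, yielding non-negativity on $[0,+\infty)\times\bR$. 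The remaining cases $b<a<0$ and $b>-a>0$ I would not redo directly: applying Lemma \ref{Gop} in the form $G_{a,b}(t,s)=-G_{-a,-b}(-t,-s)$ sends $(a,b)$ to an already-treated sign pattern while reflecting $t\mapsto -t$, and since $\sigma(-a,-b)=-\sigma(a,b)$ the strip $[0,\sigma(-a,-b)]$ is carried onto $[\sigma(a,b),0]$ and $[0,+\infty)$ onto $(-\infty,0]$, turning each non-negativity statement into the asserted non-positivity one.

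The real content is the \emph{optimality} clause. To show that $G$ changes sign on any strip not contained in the listed one, I would argue that crossing either endpoint forces the two active half-ranges to disagree. In the case $0<a<b$, if $\beta>\sigma(a,b)$ then for $t\in(\sigma(a,b),\beta]$ the half-range $\{0<s<t\}$ fails to be positive by the ``if and only if'' in Lemma \ref{lempma2}, so $G(t,\cdot)$ is strictly negative at some $s$ there, while $\{-t<s<0\}$ remains positive; if instead $\alpha<0$, then for $t\in[\alpha,0)$ the half-range $\{t<s<0\}$ is negative (by $b>0$) and $\{0<s<-t\}$ is positive (by $a>0$), again a genuine sign change. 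The analogous argument at the single free end $t=0$ settles the case $b<-a<0$, and Lemma \ref{Gop} transfers both arguments to the negative cases.

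The one point requiring care is that the ``only if'' directions of Lemma \ref{lempma2} must deliver a \emph{strict} sign change, not merely a loss of strict positivity; this is exactly what those biconditionals encode, namely that the hyperbolic expression in \eqref{G1b} (respectively \eqref{G2b}) has a zero in the interior of the half-range once $|t|$ exceeds $|\sigma(a,b)|$, so that $G$ is strictly negative beyond it. Since Lemma \ref{lempma2} already records precisely this threshold behaviour, the remainder of the proof is routine assembly of the geometry of the support regions rather than any new estimate.
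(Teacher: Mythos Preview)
Your proposal is correct and follows exactly the approach the paper intends: the paper states this lemma as an immediate corollary of Lemma~\ref{lempma2} without writing out any argument, and your proof simply fills in that bookkeeping on the four triangular support regions. Your explicit use of Lemma~\ref{Gop} to reduce the two non-positivity cases to the already-treated non-negativity ones, and your verification that the ``only if'' threshold in Lemma~\ref{lempma2} forces a genuine sign change beyond $\sigma(a,b)$, are more detail than the paper provides but entirely in its spirit.
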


\begin{exa} Consider the problem
\begin{equation}\label{exaC2}x'(t)+\l x(-t)+2\l x(t)=e^t,\quad x(1)=c\end{equation}
with $\l> 0$. 

Clearly, we are in the case (C2).
$$\sigma(\l,2\l)=\frac{1}{\l\sqrt{3}}\ln[7+4\sqrt{3}]=\frac{1}{\l}\cdot 1.52069\dots$$
If $\l\ne 1/\sqrt{3}$, then
\begin{align*}\bar{u}(t): & =\int_0^t\[\cosh(\l\sqrt{3} (s - t)) + \frac{2}{\sqrt{3}}\sinh(\l\sqrt{3}(s-t))\]e^s\dif s+\frac{1}{\sqrt{3}}\int_{-t}^0\sinh(\omega(s+t))e^{s}\dif s\\ & = \frac{1}{3\l^2-1}\[(\l-1)(\sqrt{3}\sinh(\sqrt{3}\l t)-\cosh(\sqrt{3}\l t))+(2\l-1)e^{t}-\l e^{-t}\],
\end{align*}
$$\til u(t)=\cosh(\l\sqrt{3}t) -\sqrt{3}\sinh(\l\sqrt{3}t).$$

With these equalities, it is straightforward to construct the unique solution $w$ of problem \eqref{exaC2}. For instance, in the case $\l=c=1$,
$$\bar{u}(t)=\sinh(t),$$
and
$$w(t)=\sinh t+\frac{1-\sinh 1}{\cosh(\l\sqrt{3}) -\sqrt{3}\sinh(\l\sqrt{3})}\(\cosh(\l\sqrt{3}t) -\sqrt{3}\sinh(\l\sqrt{3}t)\).$$
Observe that for $\l=1,\ c=\sinh 1$, $w(t)=\sinh t$. Lemma \ref{lempma} guarantees the non-negativity of $w$ on $[0,1.52069\dots]$, but it is clear that the solution $w$ is positive on the whole positive real line.
\end{exa}
\subsection{The case (C3)}
We study here the case (C3) for $a=b$. In this case, it is clear that
$$G(t,s)=[1+a(s-t)]\chi_0^t(s)+a(s+t)\chi_{-t}^0(s),$$
which we can rewrite as
$$G(t,s)=\begin{cases}
  1+a(s-t), &  0\le s \le t, \\
     -1-a(s-t), & t\le s \le 0, \\ 
   a(s+t), &  -t\le s \le 0, \\
      -a(s+t), &  0\le s \le -t, \\
      0, & \text{otherwise.}
\end{cases}$$
Studying the expression of $G$ we can obtain maximum and antimaximum principles.
With this information, we can prove the following Lemma as we did with the analogous ones for cases (C1) and (C2).

\begin{lem}\label{lempma3} Assume $a=b$. Then, if $a>0$, the Green's function of problem \eqref{gpabconst} is
\begin{itemize}
\item positive on $\{(t,s), \; -t<s<0\}$ and $\{(t,s), \; 0<s<-t\}$,
\item negative on $\{(t,s), \; t<s<0\}$,
\item positive on $\{(t,s), \; 0<s<t\}$ if and only if $t \in (0,1/a)$,
\end{itemize}
and, if $a<0$, the Green's function of problem \eqref{gpabconst} is
\begin{itemize}
\item negative on $\{(t,s), \; -t<s<0\}$ and $\{(t,s), \; 0<s<-t\}$,
\item positive on $\{(t,s), \; 0<s<t\}$.
\item negative on $\{(t,s), \; t<s<0\}$ if and only if $t \in (1/a,0)$.
\end{itemize}
\end{lem}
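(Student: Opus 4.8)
The plan is to read off the sign of $G$ directly from the explicit piecewise-affine expression displayed above the statement, treating each of the four triangular regions separately. On every one of these regions the relevant branch of $G$ is an affine function of $s$ (with $t$ as a parameter), so its sign is controlled by monotonicity in $s$ together with the value at the appropriate endpoint. This is the same template used for Lemmas \ref{lempma2} and the earlier \ref{lempma1}, only now the branches are linear rather than trigonometric or hyperbolic.

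First I would dispose of the three regions on which the sign is unconditional for a fixed sign of $a$. On $\{-t<s<0\}$ the branch is $a(s+t)$ and $s+t>0$ there, so its sign is that of $a$; on $\{0<s<-t\}$ the branch is $-a(s+t)$ with $s+t<0$, again of sign $a$; and on $\{t<s<0\}$ the branch is $-1-a(s-t)$ with $s-t>0$, which for $a>0$ is bounded above by $-1$, hence strictly negative. These observations produce the first two bullets in each half of the statement at once.

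Then come the two conditional regions. On $\{0<s<t\}$ (so $t>0$) the branch $1+a(s-t)$ is increasing in $s$ when $a>0$, so its infimum over the open region is the boundary value $1-at$ as $s\to0^+$; thus $G>0$ throughout exactly when $1-at\ge0$, i.e. $t<1/a$, while for $t>1/a$ the branch is negative near $s=0$ but equals $1$ near $s=t$, forcing a sign change. This pins the threshold $1/a$. When $a<0$ the same branch is instead decreasing in $s$ with $1+a(s-t)>1>0$ on the whole region, giving unconditional positivity; symmetrically, on $\{t<s<0\}$ the branch $-1-a(s-t)$ has boundary value $-1+at$ as $s\to0^-$, which is $\le0$ precisely when $at\le1$, i.e. $t>1/a$, producing the threshold $1/a$ for that region.

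The computation has no real obstacle: everything reduces to the sign of a linear function on an interval, and the only care required is tracking how the direction of monotonicity in $s$ flips with the sign of $a$ and pinning each threshold at the value of $t$ where the relevant endpoint value vanishes. As an optional shortcut, the entire $a<0$ case can instead be deduced from the $a>0$ case through Lemma \ref{Gop}, since for $a=b$ the reflected coefficients $(-a,-b)$ again satisfy $-a=-b$ and hence remain in case (C3.1); the map $(t,s)\mapsto(-t,-s)$ together with the identity $G_{a,a}(t,s)=-G_{-a,-a}(-t,-s)$ interchanges the four regions in exactly the way the statement predicts.
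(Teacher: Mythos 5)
Your proposal is correct and follows essentially the same route as the paper, which omits an explicit proof here and simply notes that the lemma is proved "as we did with the analogous ones for cases (C1) and (C2)" — that is, by direct sign inspection of each branch of the piecewise expression on its triangular region, exactly as you do. Your optional reduction of the $a<0$ case to the $a>0$ case via Lemma \ref{Gop} is a valid extra shortcut not used in the paper.
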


As a corollary of the previous result we obtain the following one:

\begin{lem}\label{lempma3} Assume $a=b$. Then,
\begin{itemize}
\item if $0<a$, the Green's function of problem \eqref{gpabconst} is non-negative on $[0,1/a]\times\bR$,
\item if $a<0$, the Green's function of problem \eqref{gpabconst} is non-positive on $[1/a,0]\times\bR$,
\item the Green's function of problem \eqref{gpabconst} changes sign in any other strip not a subset of the aforementioned.
\end{itemize}

\end{lem}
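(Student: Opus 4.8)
The statement is a direct corollary of the sign analysis carried out on each triangle in the previous Lemma, so the plan is to read the conclusion off the explicit piecewise expression of $G$ and then argue optimality by exhibiting sign changes. I would fix $a>0$ first and, for each fixed $t$, examine the map $s\mapsto G(t,s)$ on the triangular regions where it is supported; the case $a<0$ I would then deduce from the symmetry of Lemma \ref{Gop} rather than redo the computation.

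For $t\in[0,1/a]$ only two pieces of $G(t,\cdot)$ are active. On $\{0\le s\le t\}$ we have $G(t,s)=1+a(s-t)$, which is affine and increasing in $s$, so its minimum over the piece is attained at $s=0$ and equals $1-at$; this is $\ge 0$ precisely because $t\le 1/a$, so the piece is nonnegative. On $\{-t\le s\le 0\}$ we have $G(t,s)=a(s+t)$, and $s+t\ge 0$ together with $a>0$ makes it nonnegative as well. Since $G(t,\cdot)$ vanishes outside these pieces, $G\ge 0$ on the whole strip $[0,1/a]\times\bR$, which gives the first item.

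For optimality I would show that any strip $[\alpha,\beta]\times\bR$ not contained in $[0,1/a]\times\bR$ contains a horizontal line $\{t\}\times\bR$ on which $G$ already assumes both signs. If $\beta>1/a$, pick $t\in(1/a,\beta]$: then $G(t,0)=1-at<0$, while at $s=-t/2\in(-t,0)$ one gets $G(t,-t/2)=a t/2>0$. If $\alpha<0$, pick $t\in[\alpha,0)$ and use the two pieces active for negative $t$: on $\{t\le s\le 0\}$ one has $G(t,t)=-1<0$, whereas on $\{0\le s\le -t\}$ one has $G(t,0)=-at>0$. In either case $G$ changes sign on the strip, which proves the third item.

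Finally, the case $a<0$ follows from the positive case by the relation $G_{a,b}(t,s)=-G_{-a,-b}(-t,-s)$ of Lemma \ref{Gop}: writing $a'=-a>0$ one has $a'=b'$, and the reflection $t\mapsto -t$ carries $[0,1/a']\times\bR$ onto $[1/a,0]\times\bR$, turning the nonnegativity just proved into the claimed nonpositivity and transferring the sign-change statement verbatim. I do not expect a genuine obstacle here; the only point requiring care is the optimality claim, where one must verify that crossing either endpoint $0$ or $1/a$ truly forces values of both signs to appear, rather than merely allowing the sign to degenerate to zero — which is exactly why I would pin down an explicit strictly positive and a strictly negative value of $G$ on each offending line.
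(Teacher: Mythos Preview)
Your proposal is correct and follows the same route as the paper, which simply records the lemma as a corollary of the preceding triangle-by-triangle sign analysis; your write-up makes that deduction explicit and is in fact more detailed than what the paper provides. The one cosmetic difference is that you handle the case $a<0$ via the symmetry $G_{a,b}(t,s)=-G_{-a,-b}(-t,-s)$ of Lemma~\ref{Gop}, whereas the paper's previous lemma already covers both signs of $a$ directly, so the corollary can be read off in one step for both cases; either path is fine.
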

For this particular case we have another way of computing the solution to the problem.

\begin{pro} Let $a=b$ and assume $2at_0\ne1$. Let $H(t):=\int_{t_0}^th(s)\dif s$ and $\cH(t):=\int_{t_0}^t H(s)\dif s$. Then problem \eqref{gpabconst} has a unique solution given by
$$u(t)=H(t)-2a\cH_o(t)+\frac{2a\,t-1}{2a\,t_0-1}c.$$
\end{pro}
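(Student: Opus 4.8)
The plan is to combine the uniqueness already available from Corollary~\ref{corconstsoln} with an explicit construction exploiting the even/odd decomposition. First I would record that for $a=b$ we are in case (C3.1), where the homogeneous solution normalised by $\til u(0)=1$ is $\til u(t)=1-2at$, so $\til u(t_0)=1-2at_0\ne0$ is exactly the standing hypothesis $2at_0\ne1$. By Corollary~\ref{corconstsoln} this already guarantees that problem~\eqref{gpabconst} has a unique solution, so it suffices to produce \emph{one} function satisfying both the equation and the initial condition.

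For the construction I would rewrite the equation using $u(t)+u(-t)=2u_e(t)$, which turns \eqref{gpabconst} into $u'(t)+2a\,u_e(t)=h(t)$. Splitting this identity into its even and odd parts (recalling that $u_e'$ is odd and $u_o'$ is even) yields the triangular system $u_e'=h_o$ and $u_o'+2a\,u_e=h_e$. I would then integrate the first equation to obtain $u_e$, substitute into the second to obtain $u_o$, and add the two. Writing everything through the antiderivatives $H$ and $\cH$, using $\tfrac{\dif}{\dif t}\cH_o(t)=H_e(t)$ together with the oddness of $\cH_o$, the even contribution should collapse to the term $-2a\,\cH_o(t)$ and the remaining primitive of $h$ to $H(t)$, while the single undetermined constant multiplies the homogeneous solution $1-2at$.

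Alternatively, and more cleanly for a write-up, I would simply verify the proposed formula by direct substitution, which is legitimate once uniqueness is in hand. Differentiating gives $u'(t)=h(t)-2a\,H_e(t)+\tfrac{2a}{2at_0-1}c$; forming $a[u(t)+u(-t)]$ and using $\cH_o(-t)=-\cH_o(t)$ shows the $\cH_o$ terms cancel and the surviving pieces reproduce $2a\,H_e(t)-\tfrac{2a}{2at_0-1}c$, so that $u'+a\,u(-t)+a\,u(t)=h$ as required. This settles the differential equation essentially for free.

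The step I expect to be the main obstacle is the initial condition. Since $H(t_0)=0$ and $\tfrac{2at_0-1}{2at_0-1}=1$, the whole check reduces to controlling the term $-2a\,\cH_o(t_0)$, and here one must compute $\cH_o(t_0)=\tfrac12[\cH(t_0)-\cH(-t_0)]=-\tfrac12\cH(-t_0)$ with care. This is the delicate bookkeeping point where the lower limit $t_0$ in the definitions of $H$ and $\cH$ interacts with the reflection $t\mapsto -t$, and it is exactly where the coefficient of the homogeneous term $1-2at$ must be pinned down so that $u(t_0)=c$ holds; I would treat reconciling this $\cH_o(t_0)$ contribution with the prescribed value $c$ as the crux of the argument.
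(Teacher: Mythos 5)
Your verification of the differential equation is correct and is essentially the paper's own computation: $\tfrac{\dif}{\dif t}\cH_o=H_e$ gives $u'(t)=h(t)-2aH_e(t)+\tfrac{2ac}{2at_0-1}$, while the oddness of $\cH_o$ gives $a[u(t)+u(-t)]=2aH_e(t)-\tfrac{2ac}{2at_0-1}$, and the two add up to $h(t)$. Likewise, invoking Corollary \ref{corconstsoln} with $\til u(t_0)=1-2at_0\ne0$ correctly reduces the problem to exhibiting one solution.

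However, the step you defer as ``the crux'' is a genuine gap, and in fact it cannot be closed for the formula as stated: the initial condition fails. One has
$$u(t_0)=H(t_0)-2a\cH_o(t_0)+c=c-2a\cH_o(t_0),\qquad \cH_o(t_0)=\tfrac12\bigl(\cH(t_0)-\cH(-t_0)\bigr)=\tfrac12\int_{-t_0}^{t_0}H(s)\dif s,$$
and this integral has no reason to vanish. Concretely, for $a=b=1$, $t_0=1$, $h\equiv1$, $c=0$ one gets $H(t)=t-1$, $\cH(t)=(t-1)^2/2$, $\cH_o(t)=-t$, so the displayed formula gives $u(t)=3t-1$, which solves the equation but has $u(1)=2\ne0$; the actual solution is $1-t$. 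The resolution is to let the coefficient of the homogeneous solution absorb the spurious term, i.e.\ to replace the last summand by $\tfrac{2at-1}{2at_0-1}\bigl(c+2a\cH_o(t_0)\bigr)$; this adds a constant multiple of $\til u(t)=1-2at$, so the equation is preserved and now $u(t_0)=c$. Be aware that the paper's own proof disposes of this point with ``clearly, $u(t_0)=c$'', which is an oversight of exactly the kind you anticipated; your proposal correctly locates the difficulty but, to be a complete proof, must carry out this correction rather than leave it as a declared obstacle.
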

\begin{proof}The equation is satisfied, since
$$u'(t)+a(u(t)+u(-t))=u'(t)+2a u_e(t)=h(t)-2\,a H_e(t)+\frac{2a\,c}{2a\,t_0-1}+2\, a H_e(t)-\frac{2a\,c}{2a\,t_0-1}=h(t).$$
The initial condition is also satisfied for, clearly, $u(t_0)=c$.
\end{proof}
\begin{exa}
Consider the problem $x'(t)+\l(x(t)-x(-t))=|t|^p,\ x(0)=1$ for $\l,p\in\bR$, $p>-1$. For $p\in(-1,0)$ we have a singularity at $0$. We can apply the theory in order to get the solution
$$u(t)=\frac{1}{p+1}t|t|^p+1-2\l t$$
where $\bar{u}(t)=\frac{1}{p+1}t|t|^p$ and $\til u(t)=1-2\l t$. $\bar{u}$ is positive in $(0,+\infty)$ and negative in $(-\infty,0)$ independently of $\l$, so the solution has better properties than the ones guaranteed by Lemma \ref{lempma3}.
\end{exa}
The next example shows that the estimate is sharp.
\begin{exa}\label{exacrit} Consider the problem
\begin{equation}\label{probexaC3} u_\e'(t)+u_\e(t)+u_\e(-t)=h_\e(t),\ t\in\bR;\quad u_\e(0)=0,\end{equation}
where $\e\in\bR$, $h_\e(t)=12x(\e-x)\chi_{[0,\e]}(x)$ and $\chi_{[0,\e]}$ is the characteristic function of the interval $[0,\e]$. Observe that $h$ is continuous. By means of the expression of the Green's function for problem \eqref{probexaC3}, we have that its unique solution is given by
$$u_\e(t)=\begin{cases} -2\e^3t-\e^4, & \text{if}\quad t<-\e, \\ -t^4-2\e t^3, & \text{if}\quad -\e<t<0, \\ t^4-(4+2\e)t^3+6\e t^2, & \text{if}\quad 0<t<\e, \\ -2\e^3 t+2\e^3+\e^4, & \text{if}\quad t>\e. \end{cases}$$
The a priory estimate on the solution tells us that $u_\e$ is non-negative at least in $[0,1]$. Studying the function $u_\e$, it is easy to check that $u_\e$ is zero at $0$ and $1+\e/2$, positive in $(-\infty,1+\e/2)\backslash\{0\}$ and negative in $(1+\e/2,+\infty)$.
\begin{figure}[h!t]
\center{\includegraphics[width=.5\textwidth]{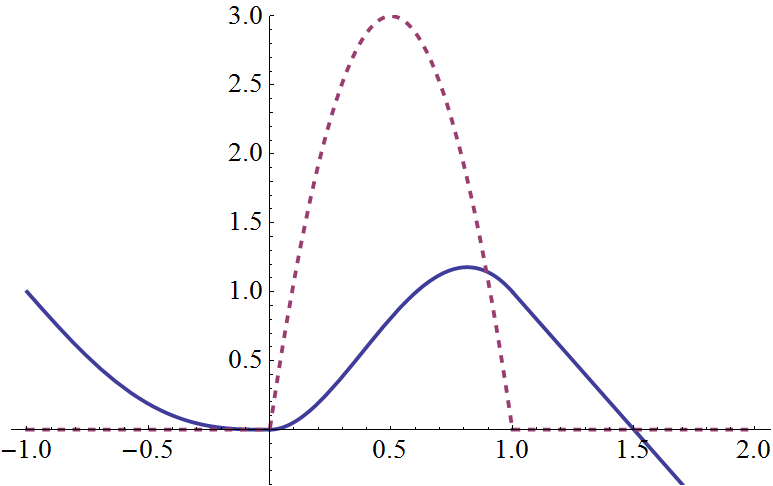}}\caption{Graph of the function $u_1$ and $h_1$ (dashed). Observe that $u$ becomes zero at $t=1+\e/2=3/2$.}
\end{figure}\par
\end{exa}

The case (C3.2) is very similar,
$$G(t,s)=\begin{cases}
  1+a(t-s), &  0\le s \le t, \\
     -1-a(t-s), & t\le s \le 0, \\ 
   a(s+t), &  -t\le s \le 0, \\
      -a(s+t), &  0\le s \le -t, \\
      0, & \text{otherwise.}
\end{cases}$$
\begin{lem}\label{lempma3} Assume $a=-b$. Then, if $a>0$, the Green's function of problem \eqref{gpabconst} is
\begin{itemize}
\item positive on $\{(t,s), \; -t<s<0\}$, $\{(t,s), \; 0<s<t\}$ and $\{(t,s), \; 0<s<-t\}$,
\item negative on $\{(t,s), \; t<s<0\}$ if and only if $t \in (-1/a,0)$,
\end{itemize}
and, if $a>0$, the Green's function of problem \eqref{gpabconst} is
\begin{itemize}
\item negative on $\{(t,s), \; -t<s<0\}$, $\{(t,s), \; t<s<0\}$ and $\{(t,s), \; 0<s<-t\}$,
\item positive on $\{(t,s), \; 0<s<t\}$ if and only if $t \in (0,-1/a)$.
\end{itemize}
\end{lem}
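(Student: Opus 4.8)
The plan is to argue exactly as in the proofs given for the cases (C1), (C2) and (C3.1): we read off the sign of $G$ branch by branch, on each of the four triangular regions where it is a nonzero affine function of $s$, treating $a>0$ and $a<0$ separately. (The second block of the statement should read ``if $a<0$''; the computation below makes this transparent.) Each branch is affine in $s$, hence monotone, with slope of sign $\pm a$; so on any one region it keeps a constant strict sign throughout the open triangle precisely when its boundary value at the $s$-endpoint closest to $0$ is of that sign, and the critical values of $t$ are found by setting that endpoint value equal to zero.

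Next I would dispose of the three branches that keep a constant sign for every admissible $t$. Take $a>0$. On $\{0<s<t\}$ we have $G(t,s)=1+a(t-s)>0$ since $t-s>0$; on $\{-t<s<0\}$ we have $s+t>0$, so $G(t,s)=a(s+t)>0$; and on $\{0<s<-t\}$ we have $s+t<0$, so $G(t,s)=-a(s+t)>0$. These three facts give at once the three ``positive'' assertions for $a>0$. For $a<0$ every inequality reverses, yielding the three corresponding ``negative'' assertions on the same three regions.

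It then remains to treat, in each sign regime, the single branch that genuinely depends on $t$. For $a>0$ and fixed $t<0$, on $\{t<s<0\}$ we have $G(t,s)=-1-a(t-s)$, increasing in $s$, equal to $-1$ at $s=t$ and tending to $-1-at$ as $s\to0^-$; thus $G(t,\cdot)<0$ on the whole open segment exactly when $-1-at\le0$, i.e. $t\ge-1/a$, which together with $t<0$ gives the stated interval $(-1/a,0)$. Symmetrically, for $a<0$ the decisive branch is $G(t,s)=1+a(t-s)$ on $\{0<s<t\}$ for fixed $t>0$; it is increasing in $s$ with infimum $1+at$ approached as $s\to0^+$, so it is positive throughout iff $1+at\ge0$, i.e. $t\le-1/a$, giving $(0,-1/a)$.

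The argument is wholly elementary, so the only thing needing care is the location of the endpoint $-1/a$ of the admissible $t$-intervals: it arises from requiring the relevant monotone affine branch not to reach zero before $s$ attains $0$, and the intervals are taken open because the strict sign is demanded only on the open triangles, so the vanishing of the endpoint value exactly at $t=-1/a$ does no harm.
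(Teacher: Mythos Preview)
Your proposal is correct and follows precisely the approach the paper intends: the paper gives no separate proof for this lemma, treating it as a direct analogue of the cases (C1), (C2) and (C3.1), and your branch-by-branch sign analysis of the affine pieces is exactly that. Your observation that the second block should read ``if $a<0$'' is right, and your remark about the endpoint $t=-1/a$ is also well taken---strictly speaking the boundary value $t=-1/a$ still gives strict sign on the \emph{open} triangle, so the ``if and only if'' with an open interval is a mild imprecision inherited from the paper's convention in the earlier cases.
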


As a corollary of the previous result we obtain the following one:

\begin{lem}\label{lempma5} Assume $a=-b$. Then,
\begin{itemize}
\item if $a>0$,the Green's function of problem \eqref{gpabconst} is non-negative on $[0,+\infty)\times\bR$,
\item if $a<0$ the Green's function of problem \eqref{gpabconst} is non-positive on $(-\infty,0]\times\bR$,
\item the Green's function of problem \eqref{gpabconst} changes sign in any other strip not a subset of the aforementioned.
\end{itemize}
\end{lem}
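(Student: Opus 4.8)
The plan is to read the three assertions directly off the explicit piecewise formula for $G$ in case (C3.2) displayed just above, exactly as was done for the corollaries in cases (C1) and (C2). The crucial structural observation is that, for a fixed abscissa $t$, only two of the four nonzero branches of $G(t,\cdot)$ are ever active, and which two is determined solely by the sign of $t$: the branches on $\{0\le s\le t\}$ and $\{-t\le s\le 0\}$ require $t\ge 0$, whereas those on $\{t\le s\le 0\}$ and $\{0\le s\le -t\}$ require $t\le 0$.

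First I would treat $a>0$ and prove non-negativity on $[0,+\infty)\times\bR$. Fix $t>0$; then the two branches needing $t\le 0$ are empty, so $G(t,\cdot)$ is supported on $\{0\le s\le t\}\cup\{-t\le s\le 0\}$. On the first, $G(t,s)=1+a(t-s)\ge 1>0$ since $a>0$ and $t-s\ge 0$; on the second, $G(t,s)=a(s+t)\ge 0$ since $s+t\ge 0$; elsewhere $G(t,\cdot)=0$. At $t=0$ every branch degenerates to the single point $s=0$, so $G(0,\cdot)=0$ a.e. Hence $G\ge 0$ on the whole strip. This is simply the assembly of the two positivity statements of the preceding Lemma that survive for $t\ge 0$.

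For $a<0$ I would obtain non-positivity on $(-\infty,0]\times\bR$ either by the identical direct computation, or more economically via Lemma \ref{Gop}. Indeed, $a=-b$ forces the reflected pair $(-a,-b)$ to satisfy $(-a)=-(-b)$ as well, so it again lies in case (C3.2), now with positive leading coefficient $-a>0$. The identity $G_{a,b}(t,s)=-G_{-a,-b}(-t,-s)$ then converts the already-proved $a>0$ statement into the claim after the substitution $(t,s)\mapsto(-t,-s)$, since $t\le 0$ corresponds to $-t\ge 0$.

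The part deserving most care is the optimality assertion (third bullet): that $G$ changes sign on every strip $[\alpha,\beta]\times\bR$ not contained in the good half-strip. For $a>0$ this forces $\alpha<0$, so the strip contains some slice $t_0<0$, and I would exhibit both signs on that one slice. At $s=t_0$ the branch $\{t_0\le s\le 0\}$ gives $G(t_0,t_0)=-1-a(t_0-t_0)=-1<0$, while for $s\in(0,-t_0)$ the branch $\{0\le s\le -t_0\}$ gives $G(t_0,s)=-a(s+t_0)>0$, because $s+t_0<0$ and $a>0$. Thus $G(t_0,\cdot)$ takes strictly positive and strictly negative values, so $G$ is not of constant sign on the strip; the case $a<0$ is symmetric, using a slice $t_0>0$ where $G(t_0,t_0)=1>0$ and $G(t_0,s)<0$ for $s\in(-t_0,0)$. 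No genuine obstacle arises here; the only thing to stay vigilant about is the bookkeeping of which branch is active for which sign of $t$, together with the harmless degeneracy of all branches at $t=0$.
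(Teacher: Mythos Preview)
Your argument is correct and follows essentially the same route as the paper: the lemma is stated there as an immediate corollary of the preceding sign lemma for the four triangles in case (C3.2), and your proposal simply spells out that reading of the explicit piecewise formula (with the optional shortcut via Lemma~\ref{Gop} for the $a<0$ half). Your explicit optimality check on a slice $t_0$ of the wrong sign is more detailed than anything the paper writes, but it is the intended mechanism.
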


Again, for this particular case we have another way of computing the solution to the problem.
\begin{pro} Let $a=-b$, $H(t):=\int_{0}^th(s)\dif s$ and $\cH(t):=\int_{0}^t H(s)\dif s$. Then problem \eqref{gpabconst} has a unique solution given by
$$u(t)=H(t)-H(t_0)-2a(\cH_e(t)-\cH_e(t_0))+c.$$
\end{pro}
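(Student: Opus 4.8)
The plan is to prove the two assertions—uniqueness and the explicit formula—separately. Uniqueness is immediate from Corollary~\ref{corconstsoln}: since $a=-b$ we are in case (C3.2), where the general homogeneous solution is constant, so the normalized solution of \eqref{heabconst} with $\til u(0)=1$ is $\til u\equiv 1$. Hence $\til u(t_0)=1\ne 0$ for every $t_0$, and Corollary~\ref{corconstsoln} yields a unique solution of \eqref{gpabconst}. This is also why, in contrast with the preceding Proposition for $a=b$, no nondegeneracy hypothesis on $t_0$ is required.

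For the formula I would verify directly that the stated $u$ solves the equation and the initial condition, mirroring the verification in the $a=b$ Proposition. Writing $b=-a$, the equation becomes
$$u'(t)+a\bigl(u(-t)-u(t)\bigr)=h(t).$$
The computation is driven by the elementary identities $H'=h$ and $\cH'=H$ together with their even/odd consequences $H_e'=h_o$, $H_o'=h_e$ and $\cH_e'=H_o$, all obtained by differentiating the definitions of even and odd parts and using that the derivative of an even function is odd and conversely. The structural point is that $\cH_e$ is even, so the $\cH_e$-term contributes nothing to the difference $u(-t)-u(t)$, which therefore collapses to $H(-t)-H(t)=-2H_o(t)$; the same $\cH_e$-term does, however, feed into $u'(t)$ through $\cH_e'=H_o$. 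Substituting into the displayed equation, everything reduces to checking that these two $H_o$-contributions cancel, and it is precisely this cancellation that fixes the coefficient multiplying $\cH_e$. The initial condition is then trivial, since $H(t)-H(t_0)$ and $\cH_e(t)-\cH_e(t_0)$ both vanish at $t=t_0$, leaving $u(t_0)=c$.

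I expect the sole real difficulty to be the even/odd sign bookkeeping in that middle step: one must keep straight the sign of $a\bigl(u(-t)-u(t)\bigr)=-2aH_o(t)$ against the sign of the contribution of $\cH_e$ to $u'$, so that they annihilate. To guard against error I would cross-check against the $a=b$ Proposition, where the equation instead reads $u'+2au_e=h$ and the auxiliary function is $\cH_o$ with $\cH_o'=H_e$; the even and odd roles there are interchanged, and tracking that interchange carefully is what turns the check into routine algebra.

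A cleaner route, which I would actually prefer because it derives the closed form rather than verifying a guessed one, is to decompose the equation $u'(t)+a(u(-t)-u(t))=h(t)$ into its even and odd parts. Using that $u_e'$ is odd while $u_o'$ is even, the even part of the equation determines $u_o$ by a single integration, giving $u_o=H_o$ from $u_o(0)=0$; substituting this into the odd part leaves a first-order relation for $u_e$ that integrates, with $\cH_e'=H_o$, to determine $u_e$ up to the free constant $u_e(0)$. Reassembling $u=u_e+u_o$ and imposing $u(t_0)=c$ to fix $u_e(0)$ produces the formula directly, and shows that the construction leaves nothing free, re-proving uniqueness of the solution.
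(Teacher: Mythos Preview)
Your core approach---direct verification that the proposed $u$ satisfies the differential equation and the initial condition---is exactly what the paper does, and your computation of $u'(t)+a(u(-t)-u(t))=h(t)$ via $\cH_e'=H_o$ and $u(-t)-u(t)=-2H_o(t)$ is the same cancellation the paper records in one displayed line. You go further in two respects the paper leaves implicit: you spell out uniqueness from Corollary~\ref{corconstsoln} (since $\til u\equiv 1$ in case (C3.2), no condition on $t_0$ is needed), and you sketch a constructive alternative via even/odd decomposition of the equation that \emph{derives} the formula rather than merely checking it; both are welcome additions but not required for the result.
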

\begin{proof}The equation is satisfied, since
$$u'(t)+a(u(t)-u(-t))=u'(t)+2\, a u_o(t)=h(t)-2\, a H_o(t)+2\, a H_o(t)=h(t).$$
The initial condition is also satisfied for, clearly, $u(t_0)=c$.
\end{proof}
\begin{exa}
Consider the problem $$x'(t)+\l(x(-t)-x(t))=\frac{\l t^2-2t+\l}{(1+t^2)^2},\ x(0)=\l$$ for $\l\in\bR$. We can apply the theory in order to get the solution
$$u(t)=\frac{1}{1+t^2}+\l(1+2\l t)\arctan t-\l^2\ln(1+t^2)+\l-1$$
where $\bar{u}(t)=\frac{1}{1+t^2}+\l(1+2\l t)\arctan t-\l^2\ln(1+t^2)-1$.

Observe that the real function
$$h(t):=\frac{\l t^2-2t+\l}{(1+t^2)^2}$$
is positive on $\bR$ if $\l > 1$ and negative  on $\bR$ for all $\l <-1$. Therefore, Lemma \ref{lempma5} guarantees that $\bar{u}$ will be positive on $(0, \infty)$ for $\l >1$ and in $(-\infty,0)$ when $\l < -1$.
\end{exa}

{\bf Acknowledgment.} The authors are thankful to the anonymous referees for the careful reading of the manuscript and suggestions.

\end{document}